\numberwithin{equation}{section}
\def\<{\langle}
\def\>{\rangle}
\def\e{{\varepsilon}}
\newcommand{\be}{\begin{equation}}
\newcommand{\ee}{\end{equation}}
\newcommand{\bea}{\begin{eqnarray}}
\newcommand{\eea}{\end{eqnarray}}
\newcommand{\beas}{\begin{eqnarray*}}
\newcommand{\eeas}{\end{eqnarray*}}
\theoremstyle{plain}
\newtheorem{theorem}{Theorem}[section]
\newtheorem{prop}[theorem]{Proposition}
\newtheorem{corollary}[theorem]{Corollary}
\newtheorem{lemma}[theorem]{Lemma}
\newtheorem{examples}[theorem]{Examples}
\newtheorem{foo}[theorem]{Remarks}
\newtheorem{As}{Assumption}
\theoremstyle{definition}
\newtheorem{remark}[theorem]{Remark}
\newtheorem{example}[theorem]{Example}
\newenvironment{Example}{\begin{example}\rm}{\end{example}}
\newenvironment{Remark}{\begin{remark}\rm}{\end{remark}}
\newcommand{\Bi}{\boldsymbol{1}}
\newcommand{\cW}{\mathcal{W}}
\newcommand{\R}{\mathbb{R}}
\newcommand{\norm}[1]{\left\|#1 \right\|}     
\newcommand{\Inf}[1]{\inf\left\{#1 \right\}}
\newcommand{\abs}[1]{\left|#1\right|}
\newcommand{\upi}[1]{\left\lfloor#1\right\rfloor}
\newlist{myitems}{enumerate}{1}
\setlist[myitems]{label=\arabic*, font=\bfseries, resume}
\def\\ud sum{\displaystyle\sum}
 \def\T{\widetilde}  
 \def\ind#1{{\bf 1}_{\{#1\}}}
\numberwithin{equation}{section}
\def\P{{\mathbb P}}
\def\E{{\mathbb E}}
\def\lev{L\'evy\xspace}
\def\ue{\mathrm e}
\def\ud{\mathrm d}
\def\T{{\mathbb T}}
\begin{document}

\title[Convergence rates for BSDE\MakeLowercase{s} driven by L\'evy processes]{Convergence rates for Backward SDE\MakeLowercase{s}\\driven by L\'evy processes}

\author[C. Liu]{Chenguang Liu}
\author[A. Papapantoleon]{Antonis Papapantoleon}
\author[A. Saplaouras]{Alexandros Saplaouras}

\address{Delft Institute of Applied Mathematics, EEMCS, TU Delft, 2628 Delft, The Netherlands}
\email{C.Liu-13@tudelft.nl}

\address{Delft Institute of Applied Mathematics, EEMCS, TU Delft, 2628 Delft, The Netherlands \& Department of Mathematics, School of Applied Mathematical and Physical Sciences, National Technical University of Athens, 15780 Zografou, Greece \& Institute of Applied and Computational Mathematics, FORTH, 70013 Heraklion, Greece}
\email{a.papapantoleon@tudelft.nl}

\address{Department of Mathematics, School of Applied Mathematical and Physical Sciences, National Technical University of Athens, 15780 Zografou, Greece}
\email{alsapl@mail.ntua.gr}

\thanks{We thank Christel Geiss for a motivating discussion that initiated this project.
    AP gratefully acknowledges the financial support from the Hellenic Foundation for Research and Innovation Grant No. HFRI-FM17-2152.
    AS gratefully acknowledges the financial support from the Hellenic Foundation for Research and Innovation Grant No. 235 (2nd Call for H.F.R.I. Research Projects to support Post-Doctoral Researchers).}

\keywords{\lev processes, backward stochastic differential equations, compound Poisson approximation, $\mathbb L^2$-norm, Wasserstein distance, Blumenthal--Getoor index, convergence rate}  

\subjclass[2020]{60G51, 91G60, 60G44, 60G42}

\date{}

\begin{abstract}
We consider \lev processes that are approximated by compound Poisson processes and, correspondingly, BSDEs driven by \lev processes that are approximated by BSDEs driven by their compound Poisson approximations. 
We are interested in the rate of convergence of the approximate BSDEs to the ones driven by the \lev processes.
The rate of convergence of the \lev processes depends on the Blumenthal--Getoor index of the process.
We derive the rate of convergence for the BSDEs in the $\mathbb L^2$-norm and in the Wasserstein distance, and show that, in both cases, this equals the rate of convergence of the corresponding \lev process, and thus is optimal.
\end{abstract}

\maketitle

\section{Introduction}

Backward stochastic differential equations (BSDEs) have become an indispensable tool in stochastic analysis, because they allow us to describe phenomena that naturally arise in many applications. 
They also offer a direct link to other fields of mathematics, such as stochastic control, as an adjoint equation in the Pontryagin stochastic maximum principle, and non-linear partial differential equations, via a generalization of the Feynman--Kac theorem.
They also appear naturally in many applied fields, such as in mathematical finance, where they describe the hedging strategy for an option position or the price of an option subject to various valuation adjustments, in game theory, where their solutions characterize the value function of the game at equilibrium, or in energy and climate economics, where they can model, \textit{e.g.} the level of emissions and the price of allowance certificates.
A general overview of the theory and applications of BSDEs is beyond the scope of this article, hence we refer to the textbooks by \citet{Carmona_2016,Crepey_2013,Touzi_2013} and \citet{Zhang_2017} that cover both the theory and various of their applications.

\lev processes have been popular in mathematical finance for almost two decades now, because they allow to describe the reality in financial markets in an adequate way.
Indeed they can capture the discontinuities present in asset prices, model the fat-tails and skews present in asset log-returns under the `real-world' measure and, simultaneously, they exhibit an implied volatility smile under the `risk-neutral' measure.
Let us refer to the textbooks by \citet{Eberlein_Kallsen_2019, Cont_Tankov_2004} and \citet{Schoutens_2003} for the theory and applications of \lev processes in mathematical finance.

Backward SDEs and \lev processes, or, more generally, general semimartingales, have been combined already in several articles in order to extend the theory of BSDEs driven by classical Brownian motions to more general settings; see \textit{e.g.}  \citet{chitashvili1983martingale,buckdahn1993backward,elkaroui1997general,briand2002robustness,carbone2008backward,el1997backward} and \citet{papapantoleon2016existence} for results in settings where the generator is Lipschitz. 
The textbook by \citet{delong2013backward} offers an overview of the theory of BSDEs driven by jump processes, and their applications in insurance and finance.

The existence and uniqueness results for BSDEs driven by \lev processes and general semimartingales were naturally followed by approximation schemes for these type of equations. 
Let us mention here the articles by \citet{bouchard2008discrete}, \citet{aazizi2013discrete} in the pure jump case, \citet{lejay2014numerical} and \citet{geiss2016simulation} where the jump part of the driving martingale is a Poisson process, \citet{kharroubi2015decomposition} where the jump process depends on the Brownian motion itself, \citet{madan2015convergence} which follows the approach of \citet{briand2001donsker}, \citet{dumitrescu2016reflected} where the jump part of the driving martingale is a Poisson process and were reflected BSDEs are considered, \citet{khedher2016discretisation} for BSDEs driven by c\`adl\`ag martingales, and also \citet{Papapantoleon_Possamai_Saplaouras_2021} where discrete- and continuous-time BSDEs driven by general martingales are considered.

However, to the best of our knowledge, convergence rates for these schemes are only considered for BSDEs driven by Brownian motion, namely in the articles by \citet{briand2021donsker} and \citet{Geiss_Labart_Luoto_2020,geiss2020random}.

The aim of the present article is to derive convergence rates for BSDEs driven by \lev processes. 
More specifically, we consider \lev processes that are approximated by compound Poisson processes and, correspondingly, BSDEs driven by \lev processes that are approximated by BSDEs driven by their compound Poisson approximations. 
This approximation is very natural in this setting, since it gives rise to an exact simulation scheme for the compound Poisson processes.
As is well known, the rate of convergence of the \lev processes depends on the Blumenthal--Getoor index of the process, which encompasses information about the properties of the path of the \lev process.
We derive the rate of convergence for the BSDEs in the $\mathbb L^2$-norm and in the Wasserstein distance, and show that, in both cases, this is equal to the rate of convergence of the corresponding \lev processes, and thus optimal.
This is contrast to the results for Brownian motion, where the rate of convergence in the Wasserstein distance is optimal, but not in the $\mathbb L^2$-norm; see \citet{briand2021donsker}.

This article is organized as follows: in Section \ref{sec:approximation} we discuss the approximation of \lev processes by compound Poisson processes and derive the Blumenthal--Getoor index for popular classes of \lev models.
In Section \ref{sec:main}, we present the setting and the main results on the rate of convergence of BSDEs driven by \lev processes.
In Section \ref{sec:approx-BSDE}, we briefly discuss the case where the generator of the BSDEs is also approximated by another sequence.
In Section \ref{sec:optimality}, we prove that the rate of convergence in the Wasserstein distance is indeed optimal.
Finally, Section \ref{sec:main-proof} contains the proofs of the results, while Appendix \ref{appendix:neg-lev-rand-walk} contains an auxiliarry result on the approximation of \lev processes by random walks.

\section{Approximation of L\'evy processes}
\label{sec:approximation}

The aim of this section is to provide some auxiliary results on the approximation of \lev processes by compound Poisson processes, while \cref{appendix:neg-lev-rand-walk} contains a (negative) result on the approximation of \lev processes by random walks under the supremum norm.
These will be useful for determining the approximating process in the next section, when we will consider the approximation of BSDEs driven by \lev processes. 
 
Let $T>0$ be fixed, set $\mathbb T:=[0,T]$, and consider a complete stochastic basis $(\Omega, \mathcal{G}, \mathbb{G}, \P)$ in the sense of \citet[I.1.3]{jacod2003limit}, \textit{i.e.} the filtration $\mathbb{G}=(\mathcal{G}_t)_{t\in\T}$ satisfies the usual conditions.
Moreover, let $\E$ denote the expectation with respect to the measure $\P$.
Let us consider an infinite activity pure-jump \lev process, that will be approximated by a compound Poisson process. 
We want to set the notation and derive the rate of convergence for this approximation.

Let $X=(X_t)_{t\in\T}$ denote a pure-jump, square integrable L\'evy martingale with triplet $(0,0,\nu)$ and canonical decomposition
\begin{align}\label{levylim}
    X_t= \int_0^t \int_{\R^d} x\widetilde \mu(\ud s,\ud x),
\end{align}
where $\widetilde \mu(\ud s,\ud x)=  \mu(\ud s,\ud x)-\nu(\ud x)\ud s,$  and $\mu$ is the Poisson random measure associated with  $(\Delta X_t)_{t\in\T}$.
The square integrability of the process means that the following condition is satisfied: $ \int_{\norm{x}\ge1} \norm{x}^2 \nu(\ud x)<+\infty$.
Moreover, we will assume in the sequel that the filtration $\mathbb{G}$ is the usual augmentation of the natural filtration generated by the \lev process $X$.

Let us introduce an approximating sequence for this \lev martingale. 
Let $X^n=(X^n_t)_{t\in\T}$ be a pure-jump, square integrable \lev martingale with triplet $(0,0,\nu^n)$ and canonical decomposition
\begin{align}\label{levyapp}
    X^n_t= \int_0^t \int_{\R^d} x\widetilde \mu^n(\ud s,\ud x),
\end{align}
where $\widetilde \mu^n(\ud s,\ud x)=  \mu^n(\ud s,\ud x)-\nu^n(\ud x)\ud s,$  and $\mu^n$ is the Poisson random measure associated with  $(\Delta X^n_t)_{t\in\T}$, for every $n\in\mathbb N$.
The natural choice for the Poisson random measure $\mu^n$ is 
\begin{align}\label{can-app-jump-meas}
    \mu^n(\ud s,\ud x) := \ind{\norm{x}\ge \frac1n} \mu(\ud s,\ud x),
\end{align}
\textit{i.e.} we truncate the small jumps of the process in a ball of radius $\frac1n$ and send $n\to\infty$.
The definition of $\mu^n$ implies that, for every $n\in \mathbb{N}$, the associated \lev measure equals
\begin{align}\label{can-app-lev-meas}
    \nu^n(\ud x) = \ind{\norm{x}\ge \frac1n} \nu(\ud x).
\end{align}
This approach gives rise to a simulation scheme, since $X^n$ is a compound Poisson process that can be simulated exactly; see \textit{e.g.} \citet[\S 6.3]{Cont_Tankov_2004}.
Let $\mathbb{G}^n$ denote the filtration generated by the \lev martingale $(X^n_t)_{t\in\T}$.
Assuming that \eqref{can-app-jump-meas} holds, then $\mathbb G^n\subset \mathbb G^{n+1}\subset \dots \subset \mathbb G$ for every $n\in\mathbb{N}$.
Additionally, an immediate, nevertheless important, observation based on the special form of the random measure \eqref{can-app-jump-meas} is that every $\mathbb{G}^n-$martingale remains a $\mathbb{G}-$martingale.

The Blumenthal--Getoor index $\beta_*$ of the \lev process $X$, defined below in terms of the \lev measure $\nu$, 
\begin{align*}
      \beta_* := \Inf{ \beta>0,\  \int_{\norm{x}\leq 1} \norm{x}^\beta \nu(\ud x) <+\infty },
\end{align*}
plays a particular role in the computation of the convergence rate.

\begin{lemma}\label{appprocess}
Let $X$ and $X^n$ be as in \eqref{levylim} and \eqref{levyapp}--\eqref{can-app-lev-meas} and assume that the Blumenthal--Getoor index satisfies $\beta_*<2.$  
Then, for any $n\ge 1$ and $\beta\in(\beta_*,2)$, we have the following inequality
\begin{align*}
    \E\Big[\sup_{t\in\T}\norm{ X_t- X^n_t}^2\Big]^{\frac12}  \leq \frac{C_\beta \sqrt T}{n^{1-\frac\beta2}},
\end{align*}
where $ C_\beta = 2 \big(\int_{\R^d} \norm{x}^\beta \nu(\ud x)\big)^{\frac12}$.
\end{lemma}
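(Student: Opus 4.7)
The difference $X_t - X^n_t$ is, by linearity of the stochastic integral with respect to the compensated Poisson random measure and the choice \eqref{can-app-jump-meas}, a purely small-jump part:
\begin{equation*}
X_t - X^n_t \;=\; \int_0^t \int_{\|x\|<1/n} x\,\widetilde\mu(\td s,\td x).
\end{equation*}
This is a square-integrable martingale (the integrand satisfies $\int_{\|x\|<1/n}\|x\|^2\nu(\td x)<\infty$ since $\int \|x\|^2(1\wedge 1)\nu(\td x)$ is finite for the small jumps of a square-integrable \lev process). So the first step is just to isolate this representation.

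Next, I would apply Doob's $L^2$ maximal inequality to bound the supremum by a factor $4$ times the terminal second moment, and then use the $L^2$-isometry for stochastic integrals against a compensated Poisson random measure to obtain
\begin{equation*}
\E\Big[\sup_{t\in\T}\|X_t - X^n_t\|^2\Big] \;\le\; 4\,\E\|X_T - X^n_T\|^2 \;=\; 4T\int_{\|x\|<1/n}\|x\|^2\,\nu(\td x).
\end{equation*}

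The final step is to trade two powers of $\|x\|$ for the Blumenthal--Getoor exponent. On $\{\|x\|<1/n\}$ we have $\|x\|^{2-\beta}\le n^{\beta-2}$, hence
\begin{equation*}
\int_{\|x\|<1/n}\|x\|^2\,\nu(\td x)\;=\;\int_{\|x\|<1/n}\|x\|^{2-\beta}\|x\|^{\beta}\,\nu(\td x)\;\le\;\frac{1}{n^{2-\beta}}\int_{\R^d}\|x\|^{\beta}\,\nu(\td x).
\end{equation*}
The right-hand integral is finite because for $\beta\in(\beta_*,2)$ the contribution from $\|x\|\le 1$ is finite by the very definition of the Blumenthal--Getoor index, while the contribution from $\|x\|\ge 1$ is controlled by $\int_{\|x\|\ge 1}\|x\|^2\nu(\td x)<\infty$ (since $\|x\|^\beta\le\|x\|^2$ there), which is the square-integrability hypothesis. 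Taking square roots, combining the constants and using $C_\beta=2\big(\int_{\R^d}\|x\|^\beta\nu(\td x)\big)^{1/2}$ yields the claimed inequality.

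\textbf{Main obstacle.} There is no serious obstruction: the proof is essentially a one-line application of Doob plus the Poisson isometry, together with the elementary interpolation $\|x\|^2\le n^{\beta-2}\|x\|^\beta$ on the truncation domain. The only point that requires a brief verification is the finiteness of $\int_{\R^d}\|x\|^\beta\nu(\td x)$, which needs to combine the Blumenthal--Getoor definition (for small $x$) with the square-integrability assumption (for large $x$), as explained above.
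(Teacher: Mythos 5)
Your proposal is correct and follows essentially the same route as the paper's proof: isolate the small-jump martingale $\int_0^t\int_{\norm{x}< 1/n}x\,\widetilde\mu(\ud s,\ud x)$, apply Doob's $L^2$ maximal inequality and the It\^o isometry, and interpolate $\norm{x}^2\leq n^{\beta-2}\norm{x}^\beta$ on the truncation region. Your additional verification that $\int_{\R^d}\norm{x}^\beta\nu(\ud x)<+\infty$ (splitting at $\norm{x}=1$ and using square integrability for the large jumps) is a worthwhile remark that the paper leaves implicit, and it matches the stated constant $C_\beta$ exactly.
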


\begin{proof}
Using the definition of $X^n$ and $X$, we have that
\begin{align*}
    X_t- X^n_t= \int_0^t \int_{\norm{x}\leq \frac{1}{n}} x\widetilde \mu(\ud s,\ud x),
\end{align*}
which is a martingale in the filtration $\mathbb G$.
Then, by Doob's inequality, we get for $\beta \in (\beta_*,2)$ that
\begin{align*}
 \E\Big[\sup_{t\in\T}\norm{ X_t- X^n_t}^2\Big]
    &\leq 4\E\bigg[\norm{\int_0^T \int_{\norm{x}\leq \frac{1}{n}} x\widetilde \mu(\ud s,\ud x)}^2\bigg]
     = 4\int_0^T \int_{\norm{x}\leq \frac{1}{n}} \norm{x}^2 \nu(\ud x)\ud s \\
    &= 4T \int_{\norm{x}\leq \frac{1}{n}} \norm{x}^{2-\beta}\norm{x}^{\beta}  \nu(\ud x)
     \leq 4T \int_{\norm{x}\leq \frac{1}{n}}\frac{1}{n^{2-\beta}} \norm{x}^\beta \nu(\ud x) \\
    &\leq \frac{4T}{n^{2-\beta}} \int_{\norm{x}\leq 1} \norm{x}^\beta \nu(\ud x).
\end{align*}
Setting $C_\beta$ as above and taking the square root on both sides completes the proof.
\end{proof}

\begin{remark}
The Blumenthal--Getoor index contains information about the variation of the paths of a \lev process. 
Moreover, the Blumenthal--Getoor index is strongly related to the Sobolev index, see \citet{Glau_2016}, that determines the smoothness of the distribution of the \lev process.
\end{remark}

Let us now compute the Blumenthal--Getoor index for certain popular classes of \lev processes.

\begin{example}[Generalized Hyperbolic process]
The L\'evy measure of the generalized hyperbolic (GH) distribution has the following form:
\begin{align*}
\nu^{\text{GH}}(\ud x)
    = \frac{\ue^{\gamma x}}{\abs{x}} \bigg( \int^\infty_0 \frac{ \exp\big(-\sqrt{2y+\alpha^2}\abs{x}\big)}{\pi^2y\{J^2_{\abs{\lambda}}(\delta\sqrt{2y})+Y^2_{\abs{\lambda}}(\delta\sqrt{2y})\}}\ud y
        + \lambda \ue^{-\alpha \abs{x}}\boldsymbol{1}_{\{\lambda\ge 0\}}\bigg) \ud x,
\end{align*}
where $\alpha,\delta>0$ and $\gamma\in(-\alpha,\alpha)$, while $J_{\abs{\lambda}}, Y_{\abs{\lambda}}$ denote the modified Bessel functions of the first, resp. second, kind with index $|\lambda|$; see \textit{e.g.} \citet[Chapter 2]{Eberlein_Kallsen_2019}.
Using \citet[Proposition 2.18]{Raible2000LvyPI}, we have that the L\'evy measure of the GH process behaves like $ \boldsymbol{1}_{\{\abs{x}\leq 1\}}\nu^{\text{GH}}(\ud x)\sim \abs{x}^{-2}\boldsymbol{1}_{\{\abs{x}\leq 1\}}\ud x$. 
Therefore, we get that the Blumenthal--Getoor index equals $\beta_*=1$, since for any $\beta>1$,
\begin{align*}
   \int_{\abs{x}\leq 1}\abs{x}^{-2+\beta}\ud x<+\infty.
\end{align*}
\end{example}
 
\begin{example}[CGMY process]
The L\'evy measure of the CGMY L\'evy process equals 
\begin{align*}
\nu^{\text{CGMY}}(\ud x) = \frac{C}{x^{1+Y}} \Big( \ue^{-Mx}\boldsymbol{1}_{\{x>0\}} + \ue^{Gx}\boldsymbol{1}_{\{x<0\}} \Big) \ud x,
\end{align*}
where $C, G, M >0,$ and $Y <2$; see again \cite[Chapter 2]{Eberlein_Kallsen_2019}.
Obviously, we have that $\boldsymbol{1}_{\{\abs{x}\leq 1\}}\nu^{\text{CGMY}}(\ud x) \sim \abs{x}^{-1-Y}\boldsymbol{1}_{\{\abs{x}\leq 1\}}\ud x$.
Therefore, the Blumenthal--Getoor index equals $\beta_*=\max\{0, Y\}$, since for any $\beta>Y,$
\begin{align*}
   \int_{\abs{x}\leq 1}\abs{x}^{-1-Y+\beta}<+\infty.
\end{align*}
\end{example}

\begin{example}[Meixner process]
The L\'evy measure of the Meixner process equals 
\begin{align*}
    \nu^{\text{Meixner}}(\ud x)= \frac{\delta \exp \big(\frac{\beta}{\alpha}x\big)}{x \sinh\big(\frac{\pi}{\alpha}x\big)} \ud x,
\end{align*}
with $\alpha,\delta>0$ and $\beta\in(-\pi,\pi)$; see \citet{Schoutens_2003}.
We have that $\sinh\big(\frac{\pi}{\alpha}x\big)\sim x$ when $\abs{x}\leq 1,$ hence $ \nu^{\text{Meixner}}(\ud x)\sim \abs{x}^{-2}\ud x.$ 
This implies that the Blumenthal--Getoor index equals $\beta_*=1.$
\end{example}

\begin{example}[Pure-jump Merton model]
The canonical decomposition of the pure-jump Merton model is
\begin{align*}
    X_t = \sum_{k=1}^{N_t}J_k, 
\end{align*}
where $N$ is a Poisson process with parameter $\lambda>0$, while $J_k$ follows a normal distribution $\mathcal{N}(\mu, \sigma^2 ),$ for $k\ge1$; \textit{cf.} \cite[Chapter 2]{Eberlein_Kallsen_2019}. 
The L\'evy measure of the Merton model equals 
\begin{align*}
\nu^{\text{Merton}}(\ud x) = \frac{\lambda}{\sigma\sqrt{2\pi}}\exp\Big(-\frac{x^2}{2\sigma^2}\Big) \ud x.
\end{align*}
This implies $ \boldsymbol{1}_{\{\abs{x}\leq 1\}}\nu^{\text{Merton}}(\ud x) \sim \boldsymbol{1}_{\{\abs{x}\leq 1\}}\ud x$. 
Therefore, the Blumenthal--Getoor index equals $\beta_*=0.$
\end{example}

\section{Setting and main results}
\label{sec:main}

The aim of this section is to describe the setting we will employ, as well as the main results on convergence rates for BSDEs driven by \lev processes.
The starting point is the convergence result for BSDEs by \citet[Theorem 3.1]{Papapantoleon_Possamai_Saplaouras_2021}.
Based on the results of the previous section and Appendix \ref{appendix:neg-lev-rand-walk}, we choose a sequence of compound Poisson processes $(X^n)_{n\in\mathbb N}$ to approximate the \lev martingale $X$ driving the BSDE. 
The approximating process is quasi-left-continuous, therefore the integrator in the generator of the approximating BSDE in \cite[Theorem 3.1]{Papapantoleon_Possamai_Saplaouras_2021} can be chosen continuous, and in particular it can be the Lebesgue measure.

Let us thus consider the following BSDE driven by the \lev martingale $X$:
\begin{align}\label{mainlimeq}
Y_t = \xi + \int_t^T f\big(s,Y_s,U_s(\cdot)\big)\ud s - \int_t^T \int_{\R^d} U_s(x) \widetilde \mu(\ud s,\ud x),
\end{align}
where the terminal value $\xi$ is $\mathcal{G}_T$-measurable and $\E[\abs{\xi}^2]<+\infty.$
The approximating sequence for this BSDE is driven by the process $X^n$ and satisfies
\begin{align}\label{mainappeq}
Y^n_t = \xi^n + \int_t^T f\big(s,Y^n_s,U^n_s(\cdot)\big)\ud s - \int_t^T \int_{\R^d} U^n_s(x) \widetilde \mu^n(\ud s,\ud x),\ \  n\in\mathbb{N},
\end{align}
where the terminal value $\xi^n$ is $\mathcal{G}^n_T$-measurable and $\E[\abs{\xi^n}^2]<+\infty$, for any $n\in \mathbb N$.

 We will first assume that the terminal condition is general, \textit{i.e.} as described above, and will derive a general result on the rate of convergence.
Afterwards, we will consider the case where the terminal value is a Markovian function of the \lev process, in order to derive an explicit convergence rate.

\subsection{Setting}
\label{set-2}

The following assumptions will be in force at certain stages of this work.

\begin{myitems}[label=\textup{{(S\arabic*)}},itemindent=0.cm,leftmargin=*]
\item \label{tF2} The generator of equation \eqref{mainlimeq}, $f: [0,T]\times \R\times \mathbb L^2(\nu)\to\R$, satisfies a globally Lipschitz condition, uniformly for $t\in[0,T]$, \textit{i.e.} for any $y,y', z,z'$
    \begin{align*}
        \abs{f(t,y,z(\cdot))-f(t,y',z'(\cdot))}\leq L_f\Big[\abs{y-y'}+\Big(\int_{\R^d} \abs{z(x)-z'(x)}^2\nu (\ud x)\Big)^\frac{1}{2}\Big].
    \end{align*}
\item \label{G1} The terminal values satisfy $\xi=g(X_T)$ and $\xi^n=g(X^n_T)$, where the function $g:\R^d\to\R$ is Lipschitz, \textit{i.e.}
    \[
        \abs{g(x)-g(x')}\leq L_g\norm{x-x'}. 
    \]
\end{myitems}

\begin{remark}
The structure of the approximating process $X^n$ in \eqref{levyapp}--\eqref{can-app-jump-meas}, which implies that $\nu^n (\ud x) = \ind{\norm{x}\ge \frac1n} \nu(\ud x)$, immediately yields that $\mathbb{L}^2(\nu) \subset \mathbb{L}^2(\nu^n)$. 
Moreover, for every $h\in \mathbb{L}^2(\nu^n)$ it is also true that $h\ind{\norm{x}\ge \frac1n}\in\mathbb{L}^2(\nu)$.
This observation and Assumption \ref{tF2} imply an analogous Lipschitz condition for $\nu^n$,
\textit{i.e.} for $f: [0,T]\times \R\times \mathbb L^2(\nu^n)\to\R$ and any $y,y', z,z'$ holds that
\begin{align*}
    \abs{f(t,y,z(\cdot))-f(t,y',z'(\cdot))}\leq L_f\Big[\abs{y-y'}+\Big(\int_{\R^d} \abs{z(x)-z'(x)}^2\nu^n
    (\ud x)\Big)^\frac{1}{2}\Big].
\end{align*}
\end{remark}

\begin{Remark}
We have written the Lipschitz property in its usual form, while in \citet[Condition \textbf{(F3)}]{papapantoleon2016existence} the (stochastic) Lipschitz property is written in quadratic terms. 
We can easily verify that \ref{tF2} implies the quadratic form.
Indeed, using the notation of \cite[Lemma 2.13, Conditions \textbf{(F3)}]{papapantoleon2016existence} we have
\begin{align*}
    \abs{f(t,y,z(\cdot)) - f(t,y',z'(\cdot))}^2 
    & \leq 2L_f^2 \Big[\abs{y-y'}^2+\Big(\int_{\R^d} \abs{z(x)-z'(x)}^2\nu^n (\ud x)\Big)\Big]\\
    & = 2L_f^2 \Big[\abs{y-y'}^2+|\!|\!| z(x)-z'(x)|\!|\!|^2\Big].
\end{align*}
Moreover, the Lipschitz constants are deterministic and the Lebesgue integrator is atomless.
In other words, for $A,\Phi$ as described in \cite[Conditions \textbf{(F4)}]{papapantoleon2016existence}, the process $A$ is continuous, bounded and deterministic with $\Phi=0$.
Consequently,  given that 
\begin{align*}
    \int_0^T |f(t,0,0)|^2 \ud t <\infty,
\end{align*}
we can legitimately use \cite[Theorem 3.5]{papapantoleon2016existence} in order to conclude the existence and uniqueness of solutions for the BSDEs in \eqref{mainlimeq} and \eqref{mainappeq} in the present setting.
\end{Remark}

\subsection{Main results}

This subsection contains the main results of this work, which concerns convergence rates for BSDEs driven by \lev processes in the $\mathbb L^2$-norm and in the Wasserstein distance.

\begin{theorem} 
\label{maintheorem1}
Let $(Y,U)$ and $(Y^n,U^n)$ be the solutions of equations \eqref{mainlimeq} and \eqref{mainappeq}. 
Let $f$ satisfy Assumption \ref{tF2}, and assume that the approximating process $X^n$ has the form \eqref{levyapp}--\eqref{can-app-lev-meas}.
Then, there exists a constant $C=C_{L_f,T}$ such that
\begin{align}\label{bound Y1}
     \E\Big[\sup_{t\in\T}\abs{ Y^n_t- Y_t}^2\Big]^\frac{1}{2}
     \leq   C \E\big[ \abs{\xi^n-\xi}^2 \big]^\frac{1}{2},
\end{align}
and
\begin{align}
    \bigg( \int_0^T \int_{\R^d} \E\Big[\abs{\overline{U}^n_t(x)- U_t(x)}^2\Big]  \nu(\ud x)\ud t \bigg)^{\frac12} 
    \leq  C \E\big[ \abs{ \xi^n- \xi}^2 \big]^\frac{1}{2}. 
\end{align}
\end{theorem}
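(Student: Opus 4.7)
My plan is to adapt the classical BSDE stability estimates of El Karoui--Peng--Quenez to the present jump setting, noting that a bit of care is needed because the two BSDEs are driven by different random measures living on different filtrations.

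The first step is a reduction to a single driving measure. I set $\overline{U}^n_s(x):=U^n_s(x)\indiq_{\{\norm{x}\geq 1/n\}}$, so that
\[
\int_t^T\!\!\int_{\R^d} U^n_s(x)\,\widetilde\mu^n(\ud s,\ud x) \;=\; \int_t^T\!\!\int_{\R^d}\overline U^n_s(x)\,\widetilde\mu(\ud s,\ud x),
\]
using \eqref{can-app-jump-meas}. Writing $\Delta Y:=Y^n-Y$, $\Delta U:=\overline U^n-U$, $\Delta\xi:=\xi^n-\xi$, and $\Delta f_s:=f(s,Y^n_s,U^n_s(\cdot))-f(s,Y_s,U_s(\cdot))$, the difference satisfies a BSDE driven by $\widetilde\mu$ alone (both equations now live on the same filtration, which works because $\mathbb G^n\subset\mathbb G$ and $\mathbb G^n$-martingales remain $\mathbb G$-martingales, as noted in Section~\ref{sec:approximation}).

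The second step is an Itô expansion of $|\Delta Y_t|^2$ for this pure-jump semimartingale. This yields
\[
|\Delta Y_t|^2 + \int_t^T\!\!\int_{\R^d}|\Delta U_s(x)|^2\,\mu(\ud s,\ud x)
= |\Delta\xi|^2 + 2\int_t^T \Delta Y_s\,\Delta f_s\,\ud s - 2\int_t^T\!\!\int_{\R^d}\Delta Y_{s-}\Delta U_s(x)\,\widetilde\mu(\ud s,\ud x).
\]
Taking expectations kills the compensated integral and compensates $\mu$ to $\nu\,\ud s$. Using Assumption \ref{tF2} (and the Remark extending it to $\nu^n$, applied to $\overline U^n$ whose $L^2(\nu)$-norm equals the $L^2(\nu^n)$-norm of $U^n$) together with Young's inequality $2ab\leq 2L_f^2 a^2+\tfrac12 b^2$, I absorb half of the $\|\Delta U_s\|_{L^2(\nu)}^2$ term on the right-hand side and obtain
\[
\E[|\Delta Y_t|^2] + \tfrac12\int_t^T\E\bigl[\|\Delta U_s\|_{L^2(\nu)}^2\bigr]\ud s
\;\leq\; \E[|\Delta\xi|^2] + C_1\int_t^T\E[|\Delta Y_s|^2]\,\ud s.
\]
Gronwall's inequality then gives the desired bounds on $\E[|\Delta Y_t|^2]$ and, upon integrating back, on $\int_0^T\E[\|\Delta U_s\|_{L^2(\nu)}^2]\ud s$, which proves the second estimate.

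For the first (supremum) estimate, I go back to the Itô expansion before taking expectations, take the supremum over $t\in[0,T]$, and apply the Burkholder--Davis--Gundy inequality to the martingale
\[
M_t := \int_0^t\!\!\int_{\R^d}\Delta Y_{s-}\Delta U_s(x)\,\widetilde\mu(\ud s,\ud x),
\qquad [M]_T = \int_0^T\!\!\int_{\R^d}|\Delta Y_{s-}|^2|\Delta U_s(x)|^2\,\mu(\ud s,\ud x).
\]
Bounding $|\Delta Y_{s-}|\leq\sup_{t\in\T}|\Delta Y_t|$ inside the square root and applying Young once more absorbs $\tfrac14\E[\sup_{t}|\Delta Y_t|^2]$ on the right, leaving an estimate controlled by $\E[|\Delta\xi|^2]$ and $\int_0^T\E[\|\Delta U_s\|^2_{L^2(\nu)}]\ud s$, both already bounded by $\E[|\Delta\xi|^2]$ (up to constants). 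This yields the first estimate. The main delicate point, which I would single out, is the bookkeeping around the two different $L^2$-spaces for the jump parts and the transfer of the Lipschitz property from $L^2(\nu^n)$ to $L^2(\nu)$ via the extension $\overline U^n$; everything else is the routine Itô--Young--Gronwall--BDG machinery.
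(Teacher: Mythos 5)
Your proposal is correct, and its first half coincides with the paper's own argument: the same reduction via $\overline U^n_s(x)=U^n_s(x)\ind{\norm{x}\ge 1/n}$ to the single driving measure $\widetilde\mu$ (justified exactly as you say, since every $\mathbb G^n$-martingale remains a $\mathbb G$-martingale), followed by It\^o's formula, the Lipschitz property and Young's inequality. The paper runs this step with an exponential weight $\ue^{bt}$ and a specific choice of $b$ and $\lambda$ rather than your Gronwall lemma, but these are equivalent bookkeeping devices and both deliver your intermediate estimate, hence the $U$-bound. Where you genuinely diverge is the supremum estimate: the paper rewrites $Y^n_t-Y_t=\E\big[\xi^n-\xi+\int_t^T\big(f(s,Y^n_s,\overline U^n_s(\cdot))-f(s,Y_s,U_s(\cdot))\big)\ud s\mid\mathcal G_t\big]$ and applies Doob's $L^2$ maximal inequality to this closed martingale, then Jensen, the Lipschitz bound and the Step-1 estimate; you instead take the supremum in the It\^o identity and use Burkholder--Davis--Gundy plus absorption of $\tfrac14\E[\sup_{t\in\T}|\Delta Y_t|^2]$ into the left-hand side. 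Both routes are sound --- in fact the authors use precisely your BDG-plus-absorption scheme in the proof of Theorem \ref{maintheorem11} --- but two points that your sketch leaves implicit deserve a sentence each: first, the absorption step requires the a priori finiteness of $\E[\sup_{t\in\T}|\Delta Y_t|^2]$, which holds because both solutions lie in the relevant $\mathcal S^2$-type space by the existence theory of \citet{papapantoleon2016existence}, and which the Doob route sidesteps entirely; second, the claim that ``taking expectations kills the compensated integral'' needs the stochastic integral to be a true rather than merely local martingale, and the paper verifies this explicitly with the same BDG--Young computation that you perform later for the supremum bound, so your argument contains the needed ingredient but should invoke it already at that earlier point.
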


The proof of this result is deferred to \cref{sec:main-proof}.

\begin{Remark}
In the statement of the previous theorem, the approximating process $X^n$ depends on the measurable set $\{\norm{x}\ge\frac1n\}$, $n\in\mathbb{N}$. 
However, this set does not (explicitly) appear in the rates.
Essentially, this information is hidden in the convergence of the terminal values, which -- in turn -- encode the convergence of the $\sigma-$algebrae $\mathcal{G}^n_T \xrightarrow[n\to\infty]{\textup{w}} \mathcal{G}_T$. 
\end{Remark}

Assuming now that the terminal condition is a Markovian function of the \lev process, we can combine the results of \cref{maintheorem1} with \cref{appprocess} and compute an explicit convergence rate.

\begin{corollary}\label{cor-exp-rate}
Let $(Y,U)$ and $(Y^n,U^n)$ be the solutions of equations \eqref{mainlimeq} and \eqref{mainappeq}, where $f$ satisfies Assumption \ref{tF2} and $g$ satisfies Assumption \ref{G1}. 
Let $X$ and $X^n$ be as in \eqref{levylim} and \eqref{levyapp}--\eqref{can-app-lev-meas} and assume that the Blumenthal--Getoor index satisfies $\beta_*<2.$  
Then, for any $\beta\in(\beta_*,2)$, there exists a constant $C'=C_{L_f,L_g,T}' = C_{L_f,T}L_g$ (where $C_{L_f,T}$ is the constant from \cref{maintheorem1}), such that
\begin{align}
     \E\Big[\sup_{t\in\T}\abs{ Y^n_t- Y_t}^2\Big]^\frac{1}{2} 
     \leq C' \frac{C_{\beta}\sqrt{T}}{n^{1-\frac\beta2}},
\end{align}
and     
\begin{align}
    \bigg( \int_0^T \int_{\R^d} \E\Big[\abs{\overline{U}^n_t(x)- U_t(x)}^2\Big]  \nu(\ud x)\ud t \bigg)^{\frac12} 
    \leq C' \frac{C_\beta \sqrt{T}}{n^{1-\frac\beta2}}.
\end{align}
\end{corollary}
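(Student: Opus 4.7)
The corollary is essentially a one-line combination of the two main ingredients already established, so the plan is very short. The idea is to take the generic bound of \cref{maintheorem1}, which is expressed in terms of $\E[|\xi^n-\xi|^2]^{1/2}$, and use the Markovian structure of the terminal condition together with the Lipschitz property of $g$ to reduce this quantity to $\E[\|X^n_T-X_T\|^2]^{1/2}$, which is then controlled by \cref{appprocess}.

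Concretely, I would first invoke \cref{maintheorem1} to obtain the constant $C=C_{L_f,T}$ and the bounds
\begin{align*}
    \E\Big[\sup_{t\in\T}\abs{Y^n_t-Y_t}^2\Big]^{\frac12}
    \,\le\, C\,\E\big[|\xi^n-\xi|^2\big]^{\frac12},
\end{align*}
and analogously for the $U$--component. Then, using Assumption \ref{G1}, I would write
\begin{align*}
    \E\big[|\xi^n-\xi|^2\big]^{\frac12}
    = \E\big[|g(X^n_T)-g(X_T)|^2\big]^{\frac12}
    \le L_g\,\E\big[\|X^n_T-X_T\|^2\big]^{\frac12}
    \le L_g\,\E\Big[\sup_{t\in\T}\|X_t-X^n_t\|^2\Big]^{\frac12}.
\end{align*}
Finally, \cref{appprocess} (applicable since $\beta_*<2$ and $\beta\in(\beta_*,2)$) upper bounds the last expression by $L_g\,C_\beta\sqrt{T}/n^{1-\beta/2}$. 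Setting $C'=C_{L_f,T}\,L_g$ yields both of the claimed inequalities simultaneously.

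There is essentially no obstacle here: the proof is a direct composition of the three ingredients (Theorem, Lipschitz terminal, Lemma), with the only care being to note that $\|X^n_T-X_T\|\le \sup_{t\in\T}\|X_t-X^n_t\|$ so that \cref{appprocess} applies verbatim. No extra assumption on the generator beyond \ref{tF2} is needed, and the constant $C'$ is independent of $n$.
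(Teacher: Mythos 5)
Your proposal is correct and follows exactly the paper's own proof: apply the bound of \cref{maintheorem1}, use the Lipschitz property of $g$ from \ref{G1} to reduce $\E[|\xi^n-\xi|^2]^{1/2}$ to $L_g\,\E[\|X^n_T-X_T\|^2]^{1/2}$, and conclude with \cref{appprocess}. Your additional remark that $\|X^n_T-X_T\|\le\sup_{t\in\T}\|X_t-X^n_t\|$ is a harmless clarification the paper leaves implicit.
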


\begin{proof}
Using Assumption \ref{G1}, we have immediately that
\[
    \E\big[ \abs{\xi^n-\xi}^2 \big]^\frac{1}{2} 
        = \E\big[ \abs{g(X^n_T)-g(X_T)}^2 \big]^{\frac{1}{2}}
        \leq L_g \E\big[ \abs{X^n_T-X_T}^2 \big]^{\frac{1}{2}}.
\]
A direct application of \cref{appprocess} concludes the proof.
\end{proof}

\begin{remark}
The rate of convergence in the $\mathbb L^2$-norm is obviously optimal, as it coincides with the rate for the approximation of the \lev process itself.
\end{remark}

Moreover, we want to deduce the rate of convergence in the Wasserstein distance. 
The following result shows that, in the case of BSDEs driven by pure-jump \lev processes, the rate of convergence is the same in the $\mathbb L^2$-norm and in the Wasserstein distance, while in Section \ref{sec:optimality} we argue that this result is optimal. 
On the contrary, for BSDEs driven by Brownian motion the rate in the $\mathbb L^2$-norm is $n^{-\frac14}$, while in the Wasserstein distance it equals $n^{-\frac12}$; see \citet{geiss2020random, Geiss_Labart_Luoto_2020} and \citet{briand2021donsker}.

Let $v, v'$ be two probability measures on $(D(\mathbb T), \mathcal{B}(D(\mathbb T)))$, where $D(\mathbb T)$ is the space of c\`adl\`ag paths on $\mathbb T=[0,T]$. 

Define the Wasserstein distance between these measures by
\[
   \cW_\rho(v,v') =  \inf_{\Gamma\in\mathcal{H}(v,v') }\Big(\int_{D(\mathbb T)\times D(\mathbb T)}\rho^2(y,y')\Gamma(\mathrm{d}y,\mathrm{d}y')\Big)^{\frac{1}{2}},
\]
where $\rho(y,y'):=\sup_{t\in\T}{\abs{y_t-y_t'}}$ and $\mathcal{H}(v,v')$ is the set of couplings between $v$ and $v'$, \textit{i.e.}
$$\mathcal{H}(v,v') = \big\{ 
    \Gamma \in \text{Pr} (D(\mathbb T)\times D(\mathbb T)) \ | \
    \Gamma(A \times D(\mathbb T)) = v(A),  
    \Gamma(D(\mathbb T) \times B) = v'(B), \ \forall 
    A, B \in \mathcal{B}(D(\mathbb T))
\big\}.$$
Similarly, let $U,U'$ be two probability measures on $\widetilde D(\mathbb T)$, 
where $\widetilde D(\mathbb T)= \mathbb L^2(\nu(\mathrm{d}x)\times \mathrm{d}t).$   Define the Wasserstein distance between these measures by
\[
   \cW_{\widetilde \rho}(u,u') =  \inf_{\widetilde\Gamma\in \mathcal{\widetilde H}(u,u') }\Big(\int_{\widetilde D(\mathbb T)\times \widetilde D(\mathbb T)}\widetilde\rho^2(y,y')\widetilde \Gamma(\mathrm{d}y,\mathrm{d}y')\Big)^{\frac{1}{2}},
\] 
where $\widetilde \rho^2(y,y'):= \int_0^T \int_{\R^d}\abs{y_t'(x)- y_t(x)}^2  \nu(\ud x)\ud t$ and $\mathcal{\widetilde H}(u,u')$ is the set of couplings between $u$ and $u'$, \textit{i.e.}
$$\mathcal{\widetilde H}(u,u') = \big\{ 
    \Gamma \in \text{Pr} ( \widetilde D(\mathbb T)\times \widetilde D(\mathbb T)) \ | \
   \widetilde \Gamma(A \times \widetilde D(\mathbb T)) = u(A),  
    \widetilde \Gamma(  \widetilde D(\mathbb T) \times B) = u'(B), \ \forall 
    A, B \in \mathcal{B}( \widetilde D(\mathbb T))
\big\}.$$

\begin{corollary}
Let $(Y,U)$ and $(Y^n,U^n)$ be the solutions of equations \eqref{mainlimeq} and \eqref{mainappeq}, where $f$ satisfies Assumption \ref{tF2} and $g$ satisfies Assumption \ref{G1}.
Let $X$ and $X^n$ be as in \eqref{levylim} and \eqref{levyapp}--\eqref{can-app-lev-meas} and assume that the Blumenthal--Getoor index satisfies $\beta_*<2.$  
Then, for any $\beta\in(\beta_*,2)$, there exists a constant $C'$ (the same as in \cref{cor-exp-rate}), such that
\begin{align}\label{inequality wdy}
     \cW_\rho\big(Y^n,Y\big) 
     \leq C' \frac{C_\beta\sqrt{ T}}{n^{1-\frac\beta2}},\\
     \cW_{\widetilde\rho}\big(U^n,U\big) 
     \leq C' \frac{C_\beta\sqrt{ T}}{n^{1-\frac\beta2}}.
\end{align} 
\end{corollary}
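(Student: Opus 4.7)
The plan is to exploit the fact that $(Y^n, Y)$ and $(U^n, U)$ are both already defined on the common probability space $(\Omega, \mathcal{G}, \P)$, so that their joint laws furnish canonical couplings. Taking these couplings as test elements in the infima defining the Wasserstein distances immediately reduces both inequalities to the $\mathbb{L}^2$-estimates already established in \cref{cor-exp-rate}.

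For the first bound, let $\Gamma^n$ denote the joint distribution of $(Y^n, Y)$ on $D(\mathbb{T}) \times D(\mathbb{T})$, that is, the pushforward of $\P$ under $\omega \mapsto (Y^n(\omega,\cdot), Y(\omega,\cdot))$. Since its marginals are by construction the laws of $Y^n$ and $Y$, we have $\Gamma^n \in \mathcal{H}(\mathcal{L}(Y^n), \mathcal{L}(Y))$, and hence
\begin{align*}
\cW_\rho(Y^n, Y)^2
    \leq \int_{D(\mathbb{T}) \times D(\mathbb{T})} \rho^2(y, y')\, \Gamma^n(\mathrm{d}y, \mathrm{d}y')
    = \E\Big[\sup_{t \in \mathbb{T}} \abs{Y^n_t - Y_t}^2\Big].
\end{align*}
The first assertion of \cref{cor-exp-rate} and taking square roots yield the claim.

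For the second bound, I would run the identical argument on $\widetilde D(\mathbb{T}) \times \widetilde D(\mathbb{T})$: the joint law of $(U^n, U)$ belongs to $\widetilde{\mathcal{H}}(\mathcal{L}(U^n), \mathcal{L}(U))$, so by Fubini
\begin{align*}
\cW_{\widetilde\rho}(U^n, U)^2
    \leq \E\big[\widetilde\rho^2(U^n, U)\big]
    = \int_0^T \!\!\int_{\R^d} \E\big[\abs{U^n_t(x) - U_t(x)}^2\big]\, \nu(\ud x)\, \ud t,
\end{align*}
and the second assertion of \cref{cor-exp-rate} finishes the argument, once $U^n$ is identified with its extension-by-zero $\overline{U}^n$ (legitimate because $\nu^n = \ind{\norm{x} \geq 1/n}\, \nu$ vanishes on $\{\norm{x} < 1/n\}$, so the two objects represent the same element of $\widetilde D(\mathbb{T}) = \mathbb{L}^2(\nu(\ud x) \times \ud t)$).

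Since all the substantive analytic input has already been performed, namely the BSDE stability estimate of \cref{maintheorem1}, the Blumenthal--Getoor rate for the driving process from \cref{appprocess}, and the Lipschitz passage from $X^n_T - X_T$ to $\xi^n - \xi$ via \ref{G1}, there is no real obstacle to overcome; the entire corollary reduces to invoking the variational definition of the Wasserstein distance with the natural on-the-diagonal coupling provided by the construction. The only subtlety worth flagging in the write-up is the $\overline{U}^n$ versus $U^n$ identification above, which is not a genuine difficulty but should be stated explicitly so that the comparison of marginals in $\widetilde{\mathcal{H}}(\mathcal{L}(U^n), \mathcal{L}(U))$ is unambiguous.
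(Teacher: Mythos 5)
Your proposal is correct and is essentially identical to the paper's proof: both bound each Wasserstein distance by the corresponding $\mathbb{L}^2$-quantity using the canonical coupling given by the joint law of the solutions on the common probability space, and then invoke \cref{cor-exp-rate}. Your explicit remark identifying $U^n$ with $\overline{U}^n$ as the same element of $\mathbb{L}^2(\nu(\ud x)\times\ud t)$ is left implicit in the paper but does not change the argument.
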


\begin{proof}
Using the properties of the Wasserstein distance and \cref{cor-exp-rate}, we get immediately that
\begin{equation*}
\cW_\rho\big( Y^n,Y \big)
    \leq \E\Big[\sup_{t\in\T}\abs{ Y^n_t- Y_t}^2\Big]^\frac{1}{2}
    \leq C_{L_f,L_g,T}' \frac{C_\beta \sqrt{T}}{n^{1-\frac\beta2}},
\end{equation*}
and
\begin{equation*}
\cW_{ \widetilde \rho}\big( U^n,U \big)
    \leq  \Big( \int_0^T \int_{\R^d} \E\Big[\abs{\overline{U}^n_t(x)- U_t(x)}^2\Big]  \nu(\ud x)\ud t\Big)^{\frac{1}{2}}
    \leq C_{L_f,L_g,T}' \frac{C_\beta \sqrt{T}}{n^{1-\frac\beta2}}. \qedhere
\end{equation*}
\end{proof}

\section{Approximation of the BSDE generator}
\label{sec:approx-BSDE}

The aim of this section is to briefly discuss what happens in case the generator of the approximating BSDE $(Y^n)_{n\in\mathbb N}$ is not the same as the generator of the BSDE $Y$, but instead is a function $f^n$ that approximates the generator $f$.
The main question then is whether this approximation will affect the rate of convergence or not. 

Let us consider the BSDE $Y$ given by \eqref{mainlimeq}, while the approximating BSDE $Y^n$ takes now the form
\begin{align}\label{mainappeq1}
Y^n_t = \xi^n + \int_t^T f^n\big(s,Y^n_s,U^n_s(\cdot)\big)\ud s - \int_t^T \int_{\R^d} U^n_s(x) \widetilde \mu^n(\ud s,\ud x),\ \  n\in\mathbb{N}.
\end{align}
In addition to \ref{tF2} and \ref{G1}, the following assumptions will be used in this part of the work.

\begin{myitems}[label=\textup{{(S\arabic*)}},itemindent=0.cm,leftmargin=*]
\item \label{tF3} The generator of equation \eqref{mainappeq1}, $f^n: [0,T]\times \R\times \mathbb L^2(\nu^n)\to\R$, satisfies a globally Lipschitz condition, uniformly for $t\in[0,T]$, \textit{i.e.} for any $y,y', z,z'$
\begin{align*}
    \abs{ f^n(t,y,z(\cdot)) - f^n(t,y',z'(\cdot)) } &\leq  L_f \Big[ \abs{y-y'} + \Big(\int_{\R^d} \abs{z(x)-z'(x)}^2\nu^n (\ud x)\Big)^\frac{1}{2} \Big].
\end{align*}
\item \label{tF4} The generators $f^n$ and $f$ satisfy the following condition: for any $(t,y)\in [0,T]\times\R$ and any uniformly Lipschitz continuous function $u$ (\textit{i.e.}, $u$ for which $\abs{u(x)}/\abs{x}$ is bounded over all $x\in \R^d \backslash  \{0\}$) , holds
\begin{align}\label{usupport1}
    \big| f^n(t,y,u(\cdot))-f(t,y,u(\cdot)) \big| \leq c_n\Big(1+\abs{y}^2+\int_{\R^d} \abs{u(x)}^2\nu (\ud x)\Big)^\frac{1}{2},
\end{align}
for some $c_n>0$, that depends on $n\in\mathbb N$. 
Moreover, the sequence $(c_n)_{n\in\mathbb N}$ is assumed bounded.

\end{myitems}

\begin{theorem}
\label{maintheorem11}
Let $(Y,U)$ and $(Y^n,U^n)$ be the solutions of equations \eqref{mainlimeq} and \eqref{mainappeq1}, where $f$ and $f^n$ satisfy Assumptions \ref{tF2}, \ref{tF3} and \ref{tF4}, while $g$ satisfies Assumption \ref{G1}.
Let $X$ and $X^n$ be as in \eqref{levylim} and \eqref{levyapp}--\eqref{can-app-lev-meas} and assume that the Blumenthal--Getoor index satisfies $\beta_*<2.$  
Then, for any $\beta\in(\beta_*,2)$, there exists a constant $C''$, such that
\begin{align}\label{bound Y11}
    \E\Big[ \sup_{t\in\T}\abs{ Y^n_t- Y_t}^2 \Big]^\frac{1}{2} \leq C'' \bigg\{ \frac{C_{\beta}\sqrt{T}}{n^{1-\frac\beta2}} + c_n \bigg\},
\end{align}
and
\begin{align*}
    \bigg( \int_0^T \int_{\R^d} \E\Big[\abs{\overline{U}^n_t(x)- U_t(x)}^2\Big]  \nu(\ud x)\ud t \bigg)^{\frac12} \leq  C'' \bigg\{ \frac{C_{\beta}\sqrt{T}}{n^{1-\frac\beta2}} + c_n \bigg\}. 
\end{align*}
\end{theorem}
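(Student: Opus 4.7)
The plan is to split the error via a triangle inequality through an intermediate BSDE, so that one part is controlled by \cref{cor-exp-rate} (producing the $n^{-(1-\beta/2)}$ contribution) while the other is controlled by a classical BSDE stability estimate whose forcing term is the generator mismatch governed by Assumption \ref{tF4} (producing the $c_n$ contribution). This decomposition isolates the two sources of error: the switch of driver $X\leadsto X^n$ and the switch of generator $f\leadsto f^n$.

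Concretely, I would introduce the auxiliary BSDE
\begin{align*}
    \widetilde Y^n_t = \xi^n + \int_t^T f\big(s,\widetilde Y^n_s,\widetilde U^n_s(\cdot)\big)\,\ud s - \int_t^T\int_{\R^d}\widetilde U^n_s(x)\,\widetilde\mu^n(\ud s,\ud x),
\end{align*}
which is driven by $X^n$, carries the Markovian terminal value $\xi^n=g(X^n_T)$, and uses the \emph{limit} generator $f$. Since $(\widetilde Y^n,\widetilde U^n)$ is precisely the object to which \cref{cor-exp-rate} applies, that corollary yields
\begin{align*}
    \E\Big[\sup_{t\in\T}\abs{\widetilde Y^n_t - Y_t}^2\Big]^{\frac12} + \Big(\int_0^T\!\int_{\R^d}\E\big[\abs{\overline{\widetilde U}^n_t(x)-U_t(x)}^2\big]\nu(\ud x)\,\ud t\Big)^{\frac12} \le C'\,\frac{C_\beta\sqrt T}{n^{1-\frac\beta2}}.
\end{align*}

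Next, I would compare $(Y^n,U^n)$ with $(\widetilde Y^n,\widetilde U^n)$. These BSDEs are driven by the \emph{same} martingale measure $\widetilde\mu^n$, share the same terminal value $\xi^n$, and differ only in their generators. Applying It\^o's formula to $\ue^{\lambda t}\abs{Y^n_t-\widetilde Y^n_t}^2$ (or equivalently the a priori estimate of \citet[Theorem 3.5]{papapantoleon2016existence}), using the Lipschitz property \ref{tF3} for $f^n$ to absorb the differences in $Y$ and $U$, and the gap estimate \ref{tF4} evaluated at $(s,\widetilde Y^n_s,\widetilde U^n_s(\cdot))$, namely
\begin{align*}
    \big|f^n(s,\widetilde Y^n_s,\widetilde U^n_s(\cdot))-f(s,\widetilde Y^n_s,\widetilde U^n_s(\cdot))\big| \le c_n\Big(1+\abs{\widetilde Y^n_s}^2+\int_{\R^d}\abs{\widetilde U^n_s(x)}^2\nu(\ud x)\Big)^{\frac12},
\end{align*}
a standard Gronwall-type argument produces $\E[\sup_t\abs{Y^n_t-\widetilde Y^n_t}^2]^{1/2} + \mathbb L^2(\nu\times\ud t)$-norm of $\overline U^n-\overline{\widetilde U}^n$ bounded by $C\,c_n\,\mathcal M^{1/2}$, where $\mathcal M:=1+\sup_n\E\big[\sup_t\abs{\widetilde Y^n_t}^2 + \int_0^T\!\int_{\R^d}\abs{\widetilde U^n_s(x)}^2\nu^n(\ud x)\,\ud s\big]$. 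A triangle inequality with the previous display, together with boundedness of $(c_n)$ and $\mathcal M$, delivers \eqref{bound Y11} and its $U$-counterpart.

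The main obstacle is the legitimacy of applying \ref{tF4} at $u=\widetilde U^n_s(\cdot)$, which demands that for a.e.\ $(s,\omega)$ the map $x\mapsto\widetilde U^n_s(x)$ is uniformly Lipschitz in the sense $\abs{u(x)}/\abs{x}$ bounded. Under \ref{G1} this regularity is inherited through the Markovian representation $\widetilde U^n_s(x)=v^n(s,X^n_{s-}+x)-v^n(s,X^n_{s-})$, with $v^n$ Lipschitz in space with a constant controlled by $L_g$ (uniformly in $n$, since the approximating triplet is dominated by $(0,0,\nu)$). A secondary technical point is that the upper bound $\mathcal M$ must be uniform in $n$; this follows from the standard a priori $\mathbb L^2$-estimate applied to $(\widetilde Y^n,\widetilde U^n)$, using $\nu^n\le\nu$ and $\E[\abs{\xi^n}^2]\le C\E[1+\|X_T\|^2]$, which ensures that $c_n\mathcal M^{1/2}$ genuinely contributes a term of order $c_n$ and no worse.
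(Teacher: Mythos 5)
Your proposal is correct, but it takes a genuinely different route from the paper's. The paper compares $Y^n$ and $Y$ directly in a single It\^o computation, splitting the generator difference as $(f^n-f)\big(t,Y^n_t,\overline{U}^n_t(\cdot)\big)+\big[f\big(t,Y^n_t,\overline{U}^n_t(\cdot)\big)-f\big(t,Y_t,U_t(\cdot)\big)\big]$, so that Assumption \ref{tF4} is invoked along the solution $(Y^n,\overline{U}^n)$ of \eqref{mainappeq1}; the integrability of the resulting forcing term $C_{n,t}=c_n\big(1+\abs{Y^n_t}^2+\int_{\R^d}\abs{\overline{U}^n_t(x)}^2\nu(\ud x)\big)^{1/2}$ is supplied by the dedicated a priori estimate of \cref{prior-estimate-2}, and the legitimacy of applying \ref{tF4} by the Markovian representation $\overline{U}^n_t(x)=u^n(t,X^n_{t-}+x)-u^n(t,X^n_{t-})$ with $u^n$ Lipschitz. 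You instead insert the intermediate BSDE $(\widetilde Y^n,\widetilde U^n)$ with driver $X^n$, generator $f$ and terminal value $\xi^n$ --- which is precisely equation \eqref{mainappeq} --- and triangulate: \cref{cor-exp-rate} handles $\widetilde Y^n$ versus $Y$ as a black box, while a same-driver generator-perturbation estimate handles $Y^n$ versus $\widetilde Y^n$. The two routes consume the same ingredients in mirrored places, and you correctly identify both: your step 2 requires an a priori bound on $(\widetilde Y^n,\widetilde U^n)$ uniform in $n$ --- note that \cref{prior-estimate-2} is stated for the $f^n$-solution, so strictly you need its (identical, in fact slightly simpler, since the generator $f$ does not vary with $n$) analogue --- and you need the uniform-Lipschitz-in-$x$ property for $\widetilde U^n$ rather than $\overline{U}^n$, which follows from the same references the paper cites (\citet[Chapter 19]{cohen2015stochastic}, \citet[Chapter 4.1]{delong2013backward}). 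One small simplification: the uniformity in $n$ of the Lipschitz constant of your $v^n$, which you go out of your way to argue, is not actually needed, because the constant $c_n$ in \ref{tF4} depends only on $n$ and not on the Lipschitz constant of the test function $u$; each $\widetilde U^n_s$ being uniformly Lipschitz suffices. In terms of trade-offs, your modular decomposition cleanly separates the driver-switch error (of order $n^{-(1-\beta/2)}$) from the generator-switch error (of order $c_n$) and reuses the results of \cref{sec:main} wholesale, at the cost of introducing and estimating a third BSDE, whereas the paper's one-shot argument keeps all terms inside a single Gronwall/BDG loop and thus avoids the auxiliary equation.
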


The proof of this result is deferred to \cref{sec:main-proof}.
Next, we present two examples of situations where an approximation of the generator arises naturally, and discuss the implications for the convergence rate. 

\begin{example}
\label{ex:1}
Assume that the generator $f$ of the BSDE $Y$ takes the following form
\begin{align*}
    f(t,y,u(\cdot)) = \int_{\R^d} \Phi(t,y,u(x)) \delta(x) \nu(\ud x),
\end{align*}
where $\delta(x) = 1\land\norm{x}^{\bar{\beta}}$, for $\bar{\beta}$ such that 
\[
\int_{\norm x \leq 1} \norm x^{\bar{\beta}} \nu(\ud x) <+\infty,
\]
and $\abs{\Phi(t,y,z)}\leq \widetilde C(1+\abs{y}+\abs{z}),$ for $\widetilde C>0$. 
Obviously, $\bar{\beta} > \beta_*$.
Considering the form of the \lev measure in \eqref{can-app-lev-meas}, we set the generator $f^n$ of the approximating BSDE $Y^n$ equal to
\begin{align*}
    f^n (t,y,u(\cdot)) = \int_{\R^d}\Phi(t,y,u(x))\delta(x) \nu^n(\ud x).
\end{align*}
Then, for $C_\beta$ as defined in \cref{appprocess}, we can verify that
\begin{align*}
\big| f^n(t,y,u(\cdot)) - f(t,y,u(\cdot)) \big|    
    &= \abs{ \int_{\norm{x}\leq \frac{1}{n}} \Phi(t,y,u(x))\delta(x) \nu(\ud x)}
    \leq  \int_{\norm{x}\leq \frac{1}{n}} \abs{ \Phi(t,y,u(x))\delta(x) } \nu(\ud x) \\
    &\leq \widetilde C \int_{\norm{x}\leq \frac{1}{n}} (1+\abs{y}+\abs{u(x)})\delta(x) \nu(\ud x)\\
    &\leq \widetilde C \, c_n \{ 1+\abs{y}+\norm{u}_{\mathbb L^2(\nu)} \},
\end{align*}
where 
\begin{align*}
    c_n := \max\Big\{ \int_{\norm{x}\leq \frac{1}{n}}\norm{x}^{\bar \beta} \nu(\ud x), \Big(\int_{\norm{x}\leq \frac{1}{n}}\norm{x}^{2\bar\beta} \nu(\ud x)\Big)^{\frac{1}{2}} \Big\}\leq C_\beta\max\Big\{\frac{1}{n^{\bar \beta-\beta}},\frac{1}{n^{\bar \beta-\beta/2}}\Big\}= \frac{C_\beta}{n^{\bar \beta-\beta}},
\end{align*}
for any $\beta\in(\beta_*,\bar \beta)$. 
In other words, the rate of convergence will become worse in case $\bar\beta < 1+\frac{\beta}{2}$.
\end{example}

\begin{remark}
The example above is inspired by the portfolio liquidation problem studied in \citet{kruse2016minimal}, where the function $\Phi$ takes the form
\begin{align}
    \Phi(t,y,u(x)) = (y+u(x))\left(1-\frac{\lambda_t(x)}{\big( (y+u(x))^{q-1} + \lambda_t(x)^{q-1} \big)^{p-1}}\right)\ind{y+u(x)\ge0},
\end{align}
where $\lambda$ is deterministic and time-dependent (in our framework), and $p,q$ are H\"older conjugates; see \cite[eq. (24)]{kruse2016minimal}.
This function obviously satisfies the conditions of the previous example. 
Note that we have omitted the term $Y^q$ from the driver, since we consider Lipschitz BSDEs.
\end{remark}

\begin{example}[Time discretization]
Assume that the generator $f$ of the BSDE $Y$ is an $\alpha$-H\"older continuous function in time for $0<\alpha\leq 1$,  \textit{i.e.} for any $t,s\in \T$ holds
\begin{align*}
    \abs{ f(t,y,u(\cdot)) - f(s,y,u(\cdot)) } \leq C\abs{t-s}^\alpha \{1+\abs{y}+\norm{u}_{L^2(\nu)}\}.
\end{align*}
Then, we set the generator $f^n$ of the approximating BSDE $Y^n$ equal to
\begin{align*}
    f^n(t,y,u(\cdot)) = f(t_i^n,y,u(\cdot)), \quad t_i^n\leq t\leq t_{i+1}^n,
\end{align*}
where $t_i^n = \frac{iT}{n}$ and $i=0,\dots,n-1.$ 
In this case, we can verify that $c_n= (\frac{T}{n})^\alpha.$
In other words, the rate of convergence will become worse in case $\alpha < 1-\frac{\beta}{2}$.
\end{example}

\section{Optimality}
\label{sec:optimality}

The aim of this section is to discuss the optimality of the convergence rate in the Wasserstein distance, \textit{i.e.} inequality \eqref{inequality wdy}.
In order to make the argument clear, we just consider the following special case:  the dimension $d = 1$, the generator $f\equiv0$ and the terminal conditions equal $\xi=X_T$ and $\xi^n=X^n_T$. 
In this case, the solutions in \eqref{mainlimeq} and \eqref{mainappeq} are exactly $(Y_t)_{t\in\T}=(X_t)_{t\in\T}$ and $(Y^n_t)_{t\in\T}=(X^n_t)_{t\in\T}$.
Then we have the following results.

\begin{lemma}\label{lem indep}
Let $X$ be a \lev process as in \eqref{levylim} and $\tau$ be its first jump time, \emph{i.e.} $\tau:=\inf\{t>0, \abs{\Delta X_t}>0\}.$ 
Then $\tau$ is independent from the first jump size $\abs{\Delta X_\tau}$. 
\end{lemma}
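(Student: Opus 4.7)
The plan is to reduce the statement to the classical representation of a compound Poisson process, which applies precisely when the L\'evy measure has finite total mass. I would first dispose of the trivial situation: if $\nu(\R^d)=\infty$, then $X$ has infinitely many jumps in every neighbourhood of $0$, so $\tau=0$ almost surely, and the independence assertion is vacuous (strictly speaking $\Delta X_\tau$ is then not even well defined). The meaningful case is $\lambda:=\nu(\R^d)<\infty$, which is exactly the setting of the approximating processes $X^n$ used throughout this section.

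Assuming $\lambda<\infty$, I would invoke the marking representation of the Poisson random measure $\mu$: since its total intensity $\lambda T$ on $[0,T]\times\R^d$ is finite, one can realise $\mu$ as $\mu=\sum_{i\ge 1}\delta_{(T_i,J_i)}$, where $0<T_1<T_2<\dots$ are the arrival times of a Poisson process of rate $\lambda$ and $(J_i)_{i\ge 1}$ is an i.i.d. sequence with common law $\nu/\lambda$, independent of $(T_i)_{i\ge 1}$. This is a standard consequence of the fact that the values of a PRM on disjoint Borel sets are independent Poisson random variables, and can be pinned down by matching joint Laplace functionals.

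Since the canonical decomposition \eqref{levylim} places the jumps of $X$ exactly at the times $T_i$ with sizes $J_i$, one reads off $\tau=T_1$ and $\Delta X_\tau=J_1$. The marking representation then gives $\tau\sim\mathrm{Exp}(\lambda)$ independent of $J_1\sim\nu/\lambda$, and independence of $\tau$ from $\abs{\Delta X_\tau}$ follows since $\abs{\cdot}$ is a measurable function of $J_1$.

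A more hands-on alternative, avoiding any general theorem, is to compute directly, for $s>0$ and Borel $A\subset\R^d\setminus\{0\}$,
\[
\P(\tau>s,\,\Delta X_\tau\in A)=\mathrm{e}^{-\lambda s}\,\nu(A)/\lambda=\P(\tau>s)\,\P(\Delta X_\tau\in A),
\]
by exploiting that $\mu([0,\cdot]\times A)$ and $\mu([0,\cdot]\times A^c)$ are independent Poisson processes of rates $\nu(A)$ and $\lambda-\nu(A)$ respectively, so that the events ``no jump in $[0,s]$'' and ``first jump lands in $A$'' decouple into independent pieces. I do not anticipate a real obstacle here: the only care needed is to handle the infinite-activity case at the outset and to either cite or verify the marking / disjoint-independence property of the PRM cleanly.
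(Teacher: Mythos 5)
Your proof is correct and takes essentially the same route as the paper: a dichotomy on whether $\nu(\mathbb{R}^d)$ is finite, with the finite-activity case settled by the standard compound Poisson construction (i.i.d.\ marks independent of the Poisson arrival times) and the infinite-activity case by $\tau=0$ almost surely. The paper proves the latter explicitly via $\P(\tau>\e)\leq \ue^{-\e\int_{\norm{x}\ge a}\nu(\ud x)}\to 0$ as $a\to 0$, where you cite it as standard, and it reads the degenerate case as independence of a constant rather than as vacuous --- purely cosmetic differences.
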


\begin{proof}
\textit{Compound Poisson processes:}
In this case, we have that $\nu([-1,1]^d) <+\infty,$ which implies that $(X_t)_{t\in\T}$ is a compound Poisson process. 
Hence, by the construction,  the first jump time is independent from the first jump size.

\textit{Infinite activity processes:}
In this case, we have that $\nu([-1,1]^d) =+\infty.$ 
We are going to show that $\tau=0$ almost surely, which implies that $\tau$ is independent from the first jump size. 
For any $a>0,$ define $\tau^a:=\inf\{t>0, \norm{\Delta X_t}\ge a\}.$ 
Then $\tau^a$ is also the first jump time of a (simple) Poisson process with decomposition $\int_0^\cdot \int_{\norm{x}\ge a}  \mu(\ud s,\ud x).$ Hence, for any $\e>0,$
\begin{align*}
    \P(\tau^a>\e)= \P\Big(\int_0^\e \int_{\norm{x}\ge a}  \mu(\ud s,\ud x)=0\Big)=\ue^{-\e \int_{\norm{x}\ge a}\nu(\ud x)},
\end{align*}
therefore $\P(\tau>\e)\leq  \P(\tau^a>\e)= \ue^{-\e \int_{\norm{x}\ge a}\nu(\ud x)}. $  
Letting $a\to0,$ we have for any $\e>0, $ that $\P(\tau>\e)=0,$ which implies $\tau=0$ almost surely. 
\end{proof}

\begin{theorem}\label{thm:optimility}
Let $X$ and $X^n$ be as in \eqref{levylim} and \eqref{levyapp}--\eqref{can-app-lev-meas}. 
Then we have the following inequality: 
\begin{align}\label{ineq optimility}
\cW_\rho\big( X^n,X\big)
    \ge c_T \Big(\int_{\norm{x}\leq \frac{1}{2n}} \norm{x}^2 \nu(\ud x)\Big)^{\frac{1}{2}},
\end{align}
where $c_T>0$. 
\end{theorem}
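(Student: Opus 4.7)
The plan is to fix an arbitrary coupling $\Gamma$ of $(X,X^n)$ on a common probability space and to derive a lower bound on $\mathbb E_\Gamma[\sup_{t\in\mathbb T}(X_t-X_t^n)^2]^{1/2}$ that is independent of $\Gamma$; inequality \eqref{ineq optimility} then follows by taking the infimum over couplings.

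The key coupling-independent observation is the following. At every jump time $t$ of $X$ with $|\Delta X_t|\leq 1/(2n)$, one has $|\Delta(X-X^n)_t|\geq |\Delta X_t|$. Indeed, since any jump of $X^n$ has modulus at least $1/n > 2|\Delta X_t|$, either $X^n$ does not jump at $t$, so that $\Delta(X-X^n)_t=\Delta X_t$, or it does, in which case by the reverse triangle inequality $|\Delta(X-X^n)_t|\geq |\Delta X^n_t|-|\Delta X_t|\geq 1/n-1/(2n)=1/(2n)\geq |\Delta X_t|$. Combining this with the elementary pathwise inequality $\sup_s|Y_s|\geq \tfrac12\sup_t|\Delta Y_t|$, valid for any c\`adl\`ag process $Y$ with $Y_0=0$, one obtains
\[
\sup_{s\in\mathbb T}(X_s-X_s^n)^2\ \geq\ \frac{1}{4}\max_{t\in\mathbb T:\,0<|\Delta X_t|\leq 1/(2n)}|\Delta X_t|^2,
\]
whose right-hand side is a functional of the marginal law of $X$ only; taking $\mathbb E_\Gamma$ therefore yields a $\Gamma$-uniform lower bound.

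What remains, and is the main obstacle, is to show that $\mathbb E\big[\max_{0<|\Delta X_t|\leq 1/(2n)}|\Delta X_t|^2\big]\geq c'_T\int_{|x|\leq 1/(2n)}|x|^2\nu(\mathrm{d}x)$. For this I would apply \cref{lem indep} to the compound Poisson process obtained by restricting the jumps of $X$ to the annulus $\{\varepsilon\leq |x|\leq 1/(2n)\}$ for an $\varepsilon\in(0,1/(2n))$ chosen so that $T\lambda_\varepsilon$ is of order one, where $\lambda_\varepsilon:=\nu(\{\varepsilon\leq |x|\leq 1/(2n)\})$: the independence of the first jump time $\tau_\varepsilon$ and its size $|\Delta X_{\tau_\varepsilon}|$ produces the factorisation
\[
\mathbb E[|\Delta X_{\tau_\varepsilon}|^2\mathbf 1_{\tau_\varepsilon\leq T}]=(1-e^{-T\lambda_\varepsilon})\,\lambda_\varepsilon^{-1}\int_{\varepsilon\leq |x|\leq 1/(2n)}|x|^2\nu(\mathrm{d}x),
\]
and with the above choice of $\varepsilon$ the prefactor is bounded below by $1-e^{-1}$. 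The delicate part is then to ensure that the remaining ratio $\lambda_\varepsilon^{-1}\int_{\varepsilon\leq |x|\leq 1/(2n)}|x|^2\nu$ retains a fixed fraction of the full integral $\int_{|x|\leq 1/(2n)}|x|^2\nu$ uniformly in $n$ and in the admissible Lévy measures; this likely requires a case split according to the concentration of $\nu$ near the origin, and possibly a separate limiting argument in the infinite-activity regime to close the gap.
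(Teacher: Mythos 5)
Your reduction steps are fine as far as they go: fixing an arbitrary coupling, the jump-size observation (any jump of $X^n$ has norm at least $\frac1n$, so at a jump time of $X$ with $\norm{\Delta X_t}\le\frac{1}{2n}$ the difference process jumps by at least $\norm{\Delta X_t}$) is correct under every coupling, and the elementary bound $\sup_s\norm{Y_s}\ge\frac12\sup_t\norm{\Delta Y_t}$ is valid; this part is even robust to the compensator drift, since it compares jumps rather than levels. The fatal problem is the quantity you have reduced to. Every jump entering your maximum has norm at most $\frac{1}{2n}$, so your lower bound satisfies, pathwise and deterministically,
\begin{equation*}
\frac14\max_{t\in\T:\,0<\norm{\Delta X_t}\le \frac{1}{2n}}\norm{\Delta X_t}^2\ \le\ \frac{1}{16n^2},
\end{equation*}
whereas the right-hand side of \eqref{ineq optimility} squared is $c_T^2\int_{\norm{x}\le\frac{1}{2n}}\norm{x}^2\nu(\ud x)$, which is an $\mathbb L^2$-type mass accumulated over \emph{many} small jumps. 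For fixed $n$ it can be made arbitrarily large (take $\nu=\lambda\,\delta_{1/(2n)}$ and let $\lambda\to\infty$: your bound stays below $\frac{1}{16n^2}$ while the target grows like $\lambda n^{-2}$), and for a fixed infinite-activity measure with Blumenthal--Getoor index $\beta_*>0$ (say $\nu(\ud x)\sim\norm{x}^{-1-\alpha}\ud x$, where $\int_{\norm{x}\le\frac{1}{2n}}\norm{x}^2\nu(\ud x)\asymp n^{-(2-\alpha)}\gg n^{-2}$) it fails for all large $n$. This is precisely the regime in which the theorem is needed for \cref{cor:optimality}. So the step you flag as ``delicate'' is not a gap to be closed by a case split or a limiting argument: the target inequality of your final step is false, and no constant $c_T'$ can exist. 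Your own annulus computation already exhibits the ceiling, since $\lambda_\e^{-1}\int_{\e\le\norm{x}\le\frac{1}{2n}}\norm{x}^2\nu(\ud x)\le(2n)^{-2}$ for every $\e$, because the integrand is at most $(2n)^{-2}$ there; hence the ratio can never retain ``a fixed fraction of the full integral'' once that integral exceeds order $n^{-2}$.

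The paper's proof avoids this trap by never passing through the path-maximum of the small jumps. It also localizes at a single jump, but jointly: writing $\hat\tau,\hat\tau^n$ for the \emph{first} jump times of the two coupled processes, it shows by a three-case analysis ($\hat\tau^n<\hat\tau$, $\hat\tau^n=\hat\tau$, $\hat\tau^n>\hat\tau$, the middle case handled by expanding the square and using $\norm{\Delta\hat X^n}\ge\frac1n\ge 2\norm{\Delta\hat X}$) that $\sup_{t\in\T}\norm{\hat X^n_t-\hat X_t}^2\ge\norm{\Delta\hat X}^2$ on the event $\{\hat\tau\le T,\ \norm{\Delta\hat X}\le\frac{1}{2n}\}$, and then uses \cref{lem indep} (independence of the first jump time and size, with the infinite-activity case degenerating to $\tau=0$ a.s.) to factor the expectation as $\P(\tau\le T)$ times a quantity identified with $\int_{\norm{x}\le\frac{1}{2n}}\norm{x}^2\nu(\ud x)$. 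The structural difference is that the distribution of the first small jump is tied directly to the L\'evy measure, so the full integral survives into the bound, whereas your maximum functional is an $\mathbb L^\infty$-type quantity capped at $(2n)^{-2}$ and can never dominate it.
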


The proofs of this theorem and the next corollary are deferred to \cref{sec:main-proof}.
The next result is the main outcome of this section, and shows that the convergence rate obtained in Corollary \ref{cor-exp-rate} for the Wasserstein distance is indeed optimal.

\begin{corollary}
\label{cor:optimality}
Let $X$ and $X^n$ be as in \eqref{levylim} and \eqref{levyapp}--\eqref{can-app-lev-meas}, and assume that the Blumenthal--Getoor index satisfies $\beta_*<2$. 
Then, for any $0<\beta<\beta_*,$  holds
\begin{align}
   \limsup_{n\to\infty} n^{1-\frac{\beta}{2}} \cW_\rho\big( X^n,X\big) = +\infty.
\end{align} 
\end{corollary}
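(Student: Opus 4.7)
My plan is to argue by contradiction using the lower bound in \cref{thm:optimility} together with a dyadic decomposition of $\nu$ near the origin, with the goal of deriving the finiteness of $\int_{\|x\|\le 1}\|x\|^{\beta'}\,\nu(\mathrm{d}x)$ for some $\beta'\in(\beta,\beta_*)$, which is incompatible with the definition of the Blumenthal--Getoor index.

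Suppose, toward a contradiction, that $\limsup_{n\to\infty} n^{1-\beta/2}\,\cW_\rho(X^n,X)<+\infty$. Then \cref{thm:optimility} yields a constant $K>0$ such that, for every $n\ge 1$,
\[
    \int_{\|x\|\le 1/(2n)} \|x\|^2\,\nu(\mathrm{d}x)\le K n^{\beta-2}.
\]
Now fix any $\beta'\in(\beta,\beta_*)$, which is possible since $\beta<\beta_*$. For $k\ge 1$, introduce the dyadic annuli $A_k:=\{x\in\R^d:2^{-(k+1)}<\|x\|\le 2^{-k}\}$. Since on $A_k$ one has $\|x\|^2\ge 2^{-2(k+1)}$, it follows that $\nu(A_k)\le 2^{2(k+1)}\int_{A_k}\|x\|^2\,\nu(\mathrm{d}x)$. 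Specializing the standing bound to $n=2^{k-1}$, that is to $1/(2n)=2^{-k}$, yields $\int_{A_k}\|x\|^2\,\nu(\mathrm{d}x)\le K\cdot 2^{(k-1)(\beta-2)}$. Combining this with the estimate $\|x\|^{\beta'}\le 2^{-k\beta'}$ on $A_k$ gives
\[
    \int_{A_k}\|x\|^{\beta'}\,\nu(\mathrm{d}x)\le 4K\cdot 2^{2-\beta}\cdot 2^{k(\beta-\beta')},
\]
which is summable over $k\ge 1$ because $\beta-\beta'<0$. Adding the contribution from $\{1/2<\|x\|\le 1\}$, which is finite since $\nu$ is a L\'evy measure, I conclude that $\int_{\|x\|\le 1}\|x\|^{\beta'}\,\nu(\mathrm{d}x)<+\infty$, contradicting $\beta'<\beta_*$.

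The main difficulty is purely technical: tracking the interplay between the exponent $1-\beta/2$ in the stated rate, the power $\|x\|^{2}$ on the right of \cref{thm:optimility}, and the power $\|x\|^{\beta'}$ controlled by the Blumenthal--Getoor index. Once the dyadic scale $n=2^{k-1}$ is chosen in agreement with the radius $1/(2n)=2^{-k}$ of the annulus $A_k$, the summation becomes elementary; no further probabilistic input is required beyond \cref{thm:optimility}.
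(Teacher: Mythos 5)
Your proof is correct, but it runs in the opposite logical direction from the paper's and is genuinely a different argument. Both start from the lower bound of \cref{thm:optimility} and both exploit the gap between $\beta$ and $\beta_*$ through an annular decomposition of $\nu$ near the origin, but the mechanics differ. The paper works forward: setting $A_i=\int_{\norm{x}\leq\frac{1}{2i}}\norm{x}^2\nu(\ud x)$, it uses a summation-by-parts (Abel) estimate over the annuli $\{\frac{1}{2(i+1)}<\norm{x}\leq\frac{1}{2i}\}$ to deduce $\sum_i i^{1-\beta}A_i=+\infty$ from $\int_{\norm{x}\leq\frac12}\norm{x}^\beta\nu(\ud x)=+\infty$, then a contradiction argument to obtain $\limsup_n n^{2-\beta}A_n>0$ for every $\beta\in(0,\beta_*)$, and finally an exponent-splitting trick, writing $n^{2-\beta}=n^{\frac{\beta_*-\beta}{2}}\cdot n^{2-\frac{\beta+\beta_*}{2}}$, to upgrade positivity of the limsup to divergence. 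You instead negate the conclusion at the outset, convert the resulting uniform bound $A_n\leq Kn^{\beta-2}$ via dyadic annuli into $\int_{\norm{x}\leq 1}\norm{x}^{\beta'}\nu(\ud x)<+\infty$ for an intermediate $\beta'\in(\beta,\beta_*)$, and contradict the definition of the Blumenthal--Getoor index directly. Your contrapositive route is shorter: choosing $\beta'$ strictly above $\beta$ absorbs, in a single geometric summation, both the positivity step and the bootstrap that the paper needs as separate stages; what the paper's forward version buys is the intermediate quantitative statement $\limsup_n n^{2-\beta}A_n>0$ at every exponent $\beta<\beta_*$, which has some independent interest. One small point you should make explicit: to extract the uniform constant $K$ from a finite limsup you need each term $\cW_\rho(X^n,X)$ to be finite, which holds here because $\beta_*<2$ and \cref{appprocess} give $\E\big[\sup_{t\in\T}\norm{X^n_t-X_t}^2\big]<+\infty$. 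With that noted, your dyadic computation checks out: the choice $n=2^{k-1}$ matches $\frac{1}{2n}=2^{-k}$, the exponent count in $\int_{A_k}\norm{x}^{\beta'}\nu(\ud x)\leq 4K\,2^{2-\beta}\,2^{k(\beta-\beta')}$ is correct, the series is summable since $\beta-\beta'<0$, and the contribution of $\{\frac12<\norm{x}\leq 1\}$ is finite because $\nu$ is a \lev measure, so $\int_{\norm{x}\leq 1}\norm{x}^{\beta'}\nu(\ud x)<+\infty$ indeed forces $\beta_*\leq\beta'$, the desired contradiction.
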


\subsection{The case $\beta=\beta_*$}

In this subsection, we will consider the case $\beta=\beta^*$ and show, using two examples, that an optimal rate cannot be determined in this case. 
 
\begin{Example}
Assume that the L\'evy measure equals $\nu(\ud x)= \sum^\infty_{i=1} \delta_{\frac{1}{i}}(\ud x)$, where $\delta$ is the Dirac measure, \textit{i.e.} $\delta_x(A)=1$ if and only if $x\in A.$ 
We consider the following series:
\begin{align}\label{harseries}
    \sum^\infty_{i=1} \frac{1}{i^p}.
\end{align}
This series converges if and only if $p>1.$ 
Hence, the Blumenthal--Getoor index is exactly $1.$ 
We also have that
\begin{align*}
\int_{\norm{x}\leq \frac{1}{n}} \norm{x}^2 \nu(\ud x) 
    = \sum^\infty_{i=n} \frac{1}{i^2}
    \leq  \sum^\infty_{i=n} \frac{1}{i(i-1)}
    = \frac{1}{n-1}.
\end{align*}
In the same way we get that $\int_{\norm{x}\leq \frac{1}{n}} \norm{x}^2 \nu(\ud x)\ge \frac{1}{n}.$ 
Hence, from Theorem \ref{thm:optimility} and the proof of Proposition \ref{appprocess}, we finally get that
\begin{align*}
    \frac{c_T}{\sqrt{n}} \leq \cW_\rho\big( X^n,X\big) \leq \frac{C_T}{\sqrt{n}}.
\end{align*}
which means that the optimal rate is exactly $n^{-\frac{1}{2}}.$ 
\end{Example}

\begin{Example}
Assume that the L\'evy measure equals $\nu(\ud x)= \sum^\infty_{i=1} \delta_{\frac{\sqrt{\ln(i)}}{i}}(\ud x)$. 
The series $\sum^\infty_{i=1} \big(\frac{\sqrt{\ln(i)}}{i}\big)^p$ converges if and only if $p>1.$ 
Hence, the Blumenthal--Getoor index is again equal to $1.$ 
By the same analysis as in the previous example, we get 
\begin{align*}
\int_{\norm{x}\leq \frac{1}{n}} \norm{x}^2 \nu(\ud x) 
    &=  \sum^\infty_{i=n} \frac{\ln(i)}{i^2}\ge  \sum^\infty_{i=n} \frac{\ln(i)}{i(i+1)}
    = \sum^\infty_{i=n} \ln(i)\Big(\frac{1}{i}-\frac{1}{i+1}\Big)\\
     &=\frac{\ln(n)}{n}+\sum^{\infty}_{i=n}\frac{\ln(i+1)-\ln(i)}{i+1}\ge \frac{\ln(n)}{n}.
\end{align*}
On the other hand, for $n\ge 2$ we have
\begin{align*}
\int_{\norm{x}\leq \frac{1}{n}} \norm{x}^2 \nu(\ud x) 
    &= \sum^\infty_{i=n} \frac{\ln(i)}{i^2}
    \leq  \sum^\infty_{i=n} \frac{\ln(i)}{i(i-1)}
    = \sum^\infty_{i=n}\ln(i)\Big( \frac{1}{i-1}-\frac{1}{i}\Big)\\
    &= \frac{\ln(n)}{n-1}+\sum^{\infty}_{i=n}\frac{\ln(i+1)-\ln(i)}{i}
    \overset{(c)}{\leq} \frac{\ln(n)}{n}+\sum^{\infty}_{i=n}\frac{1}{i^2}\\
    &\leq \frac{\ln(n)}{n}+\frac{1}{n-1}
    \leq \frac{2\ln(n)}{n},
 \end{align*}
where for inequality $(c)$ we have used the fact that $\ln(1+\frac{1}{i})\leq \frac{1}{i}.$
Therefore,
\begin{align*}
    \frac{c_T\sqrt{\ln(n)}}{\sqrt{n}} \leq\cW_\rho\Big( X^n,X\Big)\leq \frac{C_T\sqrt{\ln(n)}}{\sqrt{n}}.
\end{align*}
which means that the optimal rate is $\sqrt{\ln(n)}n^{-\frac{1}{2}}$ but not $n^{-\frac{1}{2}}.$
\end{Example}

\section{Proofs}
\label{sec:main-proof}

This section contains the proofs of the main results from Sections \ref{sec:main}, \ref{sec:approx-BSDE} and \ref{sec:optimality}.

\begin{proof}[Proof of Theorem \ref{maintheorem1}]
Let us consider the setting of Subsection \ref{set-2}, \textit{i.e.} we assume that the terminal random variables $\xi,\xi^n$ are arbitrary $\mathcal G_T$-,$\mathcal G_T^n$-measurable random variables, while the \lev process $X$ is approximated by $X^n$ in \eqref{levyapp} and its \lev measure satisfies $\nu^n(\ud x)=\ind{\norm{x}\ge \frac1n} \nu(\ud x)$.
Set $ \overline{U}^n_s(x)= U^n_s(x)\ind{\norm{x}\ge \frac1n}$. 
Since $\nu^n$ is supported on $\{\norm{x}\ge\frac1n\}$, we can directly verify that $\int_{\R^d} U^n_s(x) \widetilde \mu^n(\ud s,\ud x)=\int_{\R^d} \overline{U}^n_s(x) \widetilde \mu^n(\ud s,\ud x)$ and, using the Lipschitz property \ref{tF2}, we can show that $f(s,Y^n_s,U^n_s(\cdot))=f(s,Y^n_s,\overline{U}^n_s(\cdot)).$ 
This implies that $(Y^n,\overline{U}^n_s)$ is indistinguishable from the unique solution of (\ref{mainappeq}). 
Thus, we can write $ Y^n_t- Y_t$ as follows:
\begin{align}
 Y^n_t- Y_t
    &= \xi^n - \xi + \int_t^T f\big(s,Y^n_s,\overline{U}^n_s(\cdot)\big) - f\big(s,Y_s,U_s(\cdot)\big)\ud s \nonumber\\
    &\quad\quad - \int_t^T \int_{\R^d} \overline{U}^n_s(x) \widetilde \mu^n(\ud s,\ud x) + \int_t^T \int_{\R^d} U_s(x) \widetilde \mu(\ud s,\ud x) \nonumber\\
    &= \xi^n-\xi+\int_t^T f\big(s,Y^n_s,\overline{U}^n_s(\cdot)\big)- f\big(s,Y_s,U_s(\cdot)\big)\ud s
 -\int_t^T \int_{\R^d} \overline{U}^n_s(x)- U_s(x) \widetilde \mu(\ud s,\ud x).
 \label{ident_diff_Y}
\end{align}

\noindent \textit{Step 1:} In this step, we are going to compute an auxiliary bound.
Using It\^o's formula, we have for $b,\lambda>0$ that
\begin{align}
\E\big[ & \ue^{b s} \abs{ Y^n_s- Y_s}^2\big] 
+ \E \bigg[b \int_s^T \ue^{b t} \abs{ Y^n_{t-}- Y_{t-}}^2\ud t 
+ \int_s^T\int_{\R^d} \ue^{b t} \abs{\overline{U}^n_t(x)- U_t(x)}^2  \nu(\ud x)\ud t \bigg] 
\nonumber\\
&=\E\big[\ue^{b T} \abs{ \xi^n- \xi}^2\big] 
+ 2\E \bigg[\int_s^T \ue^{b t}(Y^n_{t-}- Y_{t-})\Big(f\big(t,Y^n_t,\overline{U}^n_t(\cdot)\big)- f\big(t,Y_t,U_t(\cdot)\big)\Big)\ud t \bigg]  
    \label{equ YnY_1}\\
& \leq \E\big[\ue^{b T} \abs{ \xi^n- \xi}^2\big] 
+ (2 + \lambda)L_f  \E \bigg[ \int_s^T \ue^{bt} |Y^n_t - Y_t|^2 \ud t \bigg]
\nonumber \\
&\quad  
+ \frac{L_f}{\lambda}  
\E \bigg[ \int_s^T \int_{\R^d}\ue^{bt} \abs{\overline{U}^n_t(x)- U_t(x)}^2 \nu(\ud x)\ud t \bigg],
\label{equ YnY_2}
\end{align}
where in \eqref{equ YnY_2} we used the Lipschitz property of the generator $f$ and afterwards Young's inequality for some $\lambda>0$. 
In addition, we used the fact that the Lebesgue measure is atomless, which allows to substitute within the Lebesgue--Stieltjes integrals the variable $Y^n_{t-}$, resp. $Y_{t-}$, with the variable $Y^n_{t}$, resp. $Y_{t}$, for every $t\in \mathbb{T}$.
Regarding the identity \eqref{equ YnY_1}, we used the fact that the stochastic integral is a true martingale.
Indeed, using the Burkholder--Davis--Gundy (BDG) inequality for $p=1,$ we have
\begin{align*}
&\E\Big[\sup_{0\leq s\leq T}\abs{\int_s^T \int_{\R^d}\ue^{b t} (Y^n_{t-}- Y_{t-}) \big(\overline{U}^n_t(x)- U_t(x)\big) \widetilde \mu(\ud t,\ud x)}\Big] \\
    &\hspace{1em}\leq 
    C\E\Big[\abs{\int_0^T \int_{\R^d}\ue^{2b t} \abs{Y^n_{t-}- Y_{t-}}^2\abs{ \overline{U}^n_t(x)- U_t(x)}^2  \mu(\ud t,\ud x)}^\frac{1}{2}\Big]\\
    &\hspace{1em}\leq C\ue^{bT} \E\Big[ 
    \big(\sup_{t\in\T}\abs{Y^n_{t}- Y_{t}}^2\big)^{\frac{1}{2}}
    \abs{\int_0^T \int_{\R^d} \abs{\overline{U}^n_t(x)- U_t(x)}^2  \mu(\ud t,\ud x)}^\frac{1}{2}\Big]\\
    &\hspace{1em}\leq 
    \frac{C\ue^{bT}}{2\gamma}\E\big[ \sup_{t\in\T}\abs{Y^n_{t}- Y_{t}}^2\big]
    +\frac{C\gamma\ue^{bT}}{2}
    \E\Big[ \int_0^T \int_{\R^d} \abs{\overline{U}^n_t(x)- U_t(x)}^2  \mu(\ud t,\ud x)\Big]\\
    &\hspace{1em}=\frac{C\ue^{bT}}{2\gamma}
    \E[ \sup_{t\in\T}\abs{Y^n_{t}- Y_{t}}^2]
    +\frac{C\gamma\ue^{bT}}{2}
    \E\Big[ \int_0^T \int_{\R^d}\abs{\overline{U}^n_t(x)- U_t(x)}^2  \nu(\ud x)\ud t\Big]
    <+\infty,
\end{align*}
where $C$ is the constant from the BDG inequality and $\gamma>0$. 

Now, using \eqref{equ YnY_2} for $\lambda = 2L_f$ and $b= (2+2L_f)L_f+\frac{1}{2}$ we have for every $s\in\mathbb{T}$
\begin{align}
    &\E\big[\ue^{b s} \abs{ Y^n_s- Y_s}^2\big] 
    + \frac{1}{2}\E \bigg[\int_s^T \ue^{b t} \abs{ Y^n_{t-}- Y_{t-}}^2\ud t\bigg] 
    + \frac{1}{2}\E \bigg[\int_s^T\int_{\R^d} \ue^{b t} \abs{\overline{U}^n_t(x)- U_t(x)}^2  \nu(\ud x)\ud t \bigg] \nonumber\\
    &\leq 
    \ue^{b T}  \E\big[\abs{ \xi^n- \xi}^2\big] \label{aux_norm_bound}.
\end{align}
\noindent \textit{Step 2:} Let us now derive an upper bound for the desired norms.
To this end, we have from \eqref{ident_diff_Y} 
\begin{align*}
    Y^n_t - Y_t 
    = \mathbb{E}\bigg[ 
    \xi^n-\xi
    +\int_t^T f\big(s,Y^n_s,\overline{U}^n_s(\cdot)\big)- f\big(s,Y_s,U_s(\cdot)\big)\ud s  
    \bigg| \mathcal{G}_t    \bigg],
\end{align*}
which further implies using Doob's inequality, Jensen's inequality and the Lipschitz property of the generator
\begin{align}
    \mathbb{E}\big[\sup_{t\in\mathbb{T}} |Y^n_t - Y_t|^2 \big]
    & \leq 
    \mathbb{E} \bigg[\sup_{t\in\mathbb{T}} \bigg(\mathbb{E}\bigg[ 
    |\xi^n-\xi|
    + \int_0^T |f\big(s,Y^n_s,\overline{U}^n_s(\cdot)\big)- f\big(s,Y_s,U_s(\cdot)\big)|\ud s  \bigg| \mathcal{G}_t \bigg]\bigg)^2\bigg] \nonumber\\
    &\leq 2\mathbb{E} \bigg[ 
    |\xi^n-\xi|^2 
    + 2L_f T
    \Big( \int_0^T |Y^n_s - Y_s|^2 \ud s + \int_0^T\int_{\mathbb{R}^d} |\overline{U}^n_s(\cdot) - U_s(\cdot)\big)|^2\nu(\ud x) \ud s\Big)
    \bigg] \nonumber \\
    &\leq
    C_{L_f,T} \mathbb{E}[ | \xi^n - \xi|^2],
\end{align}
where the last inequality is an outcome of \eqref{aux_norm_bound}, for some constant $C_{L_f,T}$ which depends on the Lipschitz constant $L_f$ and the time horizon $T$.
\end{proof}

In order to prove the main results of Section \ref{sec:approx-BSDE}, we need the following preparatory result.

\begin{lemma}\label{prior-estimate-2}
Consider the setting of Theorem \ref{maintheorem11}.
Then, there exists a constant $ C_{L_f,L_g,T},$ such that
\begin{align*}
     \sup_{t\in\T}\E\big[\abs{ Y^n_t}^2\big]^\frac{1}{2} \leq C_{L_f,L_g,T},
\end{align*}
and
\begin{align*}
      \int_0^T \Big( \abs{Y^n_t}^2 + \int_{\R^d} \E\big[\abs{ \overline{U}^n_t(x)}^2\big]  \nu(\ud x) \Big) \ud t \leq  C_{L_f,L_g,T}, \nonumber
\end{align*}
where again $\overline{U}^n_t(x)=U^n_t(x)\ind{\norm{x}\ge \frac1n}.$
\end{lemma}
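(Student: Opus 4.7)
The plan is to carry out a standard a priori estimate for the BSDE solution $(Y^n,\overline{U}^n)$, following the same template as Step~1 of the proof of Theorem~\ref{maintheorem1}. The key idea is to apply It\^o's formula to $\ue^{bs}|Y^n_s|^2$ on $[t,T]$, for an appropriately chosen constant $b=b(L_f)>0$, and then to deduce from the resulting identity a bound that is uniform in $n$.

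After applying It\^o's formula and taking expectations---the jump stochastic integral being a true martingale by the same BDG argument used in the proof of Theorem~\ref{maintheorem1}---one obtains, using that $\int_{\R^d}|\overline{U}^n_s(x)|^2\nu(\ud x)=\int_{\R^d}|U^n_s(x)|^2\nu^n(\ud x)$, the identity
\begin{align*}
\E\big[ &\ue^{bt}|Y^n_t|^2\big] + \int_t^T b\,\E\big[\ue^{bs}|Y^n_s|^2\big]\ud s + \int_t^T\!\int_{\R^d}\ue^{bs}\,\E\big[|\overline{U}^n_s(x)|^2\big]\nu(\ud x)\ud s \\
&= \E\big[\ue^{bT}|\xi^n|^2\big]+2\E\int_t^T \ue^{bs} Y^n_s\, f^n\big(s,Y^n_s,\overline{U}^n_s(\cdot)\big)\ud s.
\end{align*}
Decomposing $f^n(s,Y^n_s,\overline{U}^n_s)=f^n(s,0,0)+[f^n(s,Y^n_s,\overline{U}^n_s)-f^n(s,0,0)]$ and applying Assumption~\ref{tF3} together with Young's inequality (with parameter $\lambda=2L_f$), the quantity $|2Y^n_s f^n(s,Y^n_s,\overline{U}^n_s)|$ is bounded by $|f^n(s,0,0)|^2+(1+2L_f+2L_f^2)|Y^n_s|^2+\tfrac12\int_{\R^d}|\overline{U}^n_s(x)|^2\nu(\ud x)$. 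Choosing $b=\tfrac32+2L_f+2L_f^2$ absorbs the $|Y^n|^2$ and $|\overline{U}^n|^2$ contributions into the left-hand side, and yields, for every $t\in\T$,
\begin{align*}
\E\big[|Y^n_t|^2\big]+\int_t^T\E\big[|Y^n_s|^2\big]\ud s+\int_t^T\!\int_{\R^d}\E\big[|\overline{U}^n_s(x)|^2\big]\nu(\ud x)\ud s\leq C_1\Big\{\E\big[|\xi^n|^2\big]+\int_0^T\E\big[|f^n(s,0,0)|^2\big]\ud s\Big\},
\end{align*}
for some constant $C_1=C_1(L_f,T)$.

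The final step is to show that the right-hand side is uniformly bounded in $n$. Assumption~\ref{G1} gives $\E[|\xi^n|^2]\leq 2|g(0)|^2+2L_g^2\E[|X^n_T|^2]$, and the estimate $\E[|X^n_T|^2]\leq 2\E[|X_T|^2]+2\E[|X^n_T-X_T|^2]$ is uniformly bounded in $n$ by the square integrability of $X$ and Lemma~\ref{appprocess}. Assumption~\ref{tF4} applied at $(y,u)=(0,0)$ (the zero function being trivially uniformly Lipschitz) yields $|f^n(s,0,0)|\leq|f(s,0,0)|+c_n$, and since $(c_n)_{n\in\mathbb N}$ is bounded and $\int_0^T|f(s,0,0)|^2\ud s<+\infty$ is a standing assumption, $\int_0^T\E[|f^n(s,0,0)|^2]\ud s$ is uniformly bounded as well. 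Setting $t=0$ in the displayed bound then gives the second assertion of the lemma, while taking the supremum over $t\in\T$ delivers the first. The main technical subtlety will be justifying that the stochastic-integral term in the It\^o expansion has zero expectation, so that it drops out of the identity; this is handled exactly as in Step~1 of the proof of Theorem~\ref{maintheorem1}, via a BDG argument that uses the $\mathbb L^2$-integrability of $(Y^n,U^n)$ inherent to the BSDE solution.
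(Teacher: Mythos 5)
Your proposal is correct and follows essentially the same route as the paper's own proof: It\^o's formula applied to $\ue^{bt}|Y^n_t|^2$, the decomposition of $f^n$ at $(0,0)$, the Lipschitz Assumption \ref{tF3} combined with Young's inequality (with $\lambda=2L_f$), Assumption \ref{tF4} evaluated at the zero function to get $|f^n(t,0,0)|\leq|f(t,0,0)|+c_n$ uniformly in $n$, and the BDG argument for the martingale property of the stochastic integral. The only (immaterial) difference is that you bound $\E\big[\|X^n_T\|^2\big]$ via the triangle inequality and Lemma \ref{appprocess}, whereas the paper bounds it directly through the isometry and $\nu^n\leq\nu$, i.e.\ by $T\int_{\R^d}\|x\|^2\,\nu(\ud x)$.
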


\begin{proof}
We first rewrite \eqref{mainappeq1} as follows:
\begin{align}
 Y^n_t
    &= \xi^n  + \int_t^T f^n\big(s,Y^n_s,\overline{U}^n_s(\cdot)\big) - f^n\big(s,0,0\big)\ud s+\int_t^T  f^n\big(s,0,0\big)\ud s \nonumber\\
    &\quad\quad - \int_t^T \int_{\R^d} \overline{U}^n_s(x) \widetilde \mu^n(\ud s,\ud x). 
 \label{ident_diff_Y2}
\end{align}
Similarly to Step 1 in the proof of Theorem \ref{maintheorem1}, where now we consider only $Y_t^n$, 
we get that, 
\begin{align}
\E\big[ & \ue^{b s} \abs{ Y^n_s}^2\big] 
+ \E \bigg[b \int_s^T \ue^{b t} \abs{ Y^n_{t-}}^2\ud t 
+ \int_s^T\int_{\R^d} \ue^{b t} \abs{U^n_t(x)}^2  \nu^n(\ud x)\ud t \bigg] 
\nonumber\\
&=\E\big[\ue^{b T} \abs{ \xi^n}^2\big] 
+ 2\E \bigg[\int_s^T \ue^{b t}(Y^n_{t-}-0)\Big(f^n\big(t,Y^n_t,U^n_t(\cdot)\big)- f^n\big(t,0,0\big)\Big)\ud t \bigg] \nonumber\\
&\quad + 2\E \bigg[\int_s^T \ue^{b t}Y^n_{t-} f^n\big(t,0,0\big)\ud t \bigg] 
    \label{equ YnY_12}\\
& \leq \E\big[\ue^{b T} \abs{ \xi^n}^2\big] 
+ (3 + \lambda)L_f  \E \bigg[ \int_s^T \ue^{bt} |Y^n_t |^2 \ud t \bigg]+\ue^{bT}\int_s^T (f^n)^2(t,0,0)\ud t
\nonumber \\
&\quad  
+ \frac{L_f}{\lambda}  
\E \bigg[ \int_s^T \int_{\R^d}\ue^{bt} \abs{U^n_t(x)}^2 \nu^n(\ud x)\ud t \bigg],
\label{equ YnY_22}
\end{align}
and, for $b\ge (3+2 L_f)L_f+1,$ we arrive at
\begin{align}\label{prior-eq-2}
    \int_s^T \ue^{b t} \E[\abs{ Y^n_t}^2]\ud t+\ue^{b s} \E[\abs{ Y^n_s}^2]+\frac{1}{2}\int_s^T\ue^{b t} \int_{\R^d} \E\Big[\abs{ U^n_t(x)}^2\Big]  \nu^n(\ud x)\ud t\leq \ue^{b T} \E[\abs{ \xi^n}^2] + \ue^{bT} {C_T},
\end{align}
where $C_T:= 2\int_0^T f^2(t,0,0)\ud t+2\sup_{n\ge 0}c^2_n\ge \int_0^T (f^n)^2(t,0,0)\ud t $ due to \ref{tF4}.

The definition of $\overline{U}^n_t$ implies 
\[
    \int_{\R^d} \E\big[\abs{U^n_t(x)}^2\big]  \nu^n(\ud x)=\int_{\R^d} \E\big[\abs{\overline{U}^n_t(x)}^2\big]  \nu(\ud x).
\]
Since $s\in[0,T]$ is arbitrary and 
\[
    \sup_{n\ge 0}\E[\abs{\xi^n}^2]=\sup_{n\ge 0}\E[g^2(X_T^n)]\leq 2g^2(0)+L_g\E[\norm{X_T^n}^2]\leq 2g^2(0)+L_g\int_{\R^d}\norm{x}^2\nu(\ud x)<+\infty,
\]
we get from inequality \eqref{prior-eq-2} that
\begin{align*}
\sup_{t\in\T}\E\big[\abs{ Y^n_t}^2\big]^\frac{1}{2}
    \leq \sup_{t\in\T} \ue^{bt} \E\big[\abs{ Y^n_t}^2\big]
    \leq \ue^{b T} \E\big[\abs{\xi^n}^2\big] + { \ue^{bT}C_T}\leq  C_{L_f,L_g,T},
\end{align*}
and
\begin{align*}
\int_0^T \Big( \abs{ Y^n_t}^2 + \int_{\R^d} \E\big[\abs{\overline{U}^n_t(x)}^2\big]  \nu(\ud x) \Big)\ud t
    &\leq \int_0^T \Big( \ue^{b t} \abs{Y^n_t}^2 + \frac{1}{2}\ue^{b t} \int_{\R^d} \E\big[\abs{ U^n_t(x)}^2\big] \nu^n(\ud x) \Big) \ud t\\
    &\leq \ue^{b T} \E\big[\abs{\xi^n}^2\big] + \ue^{bT}{C_T}
     \leq  C_{L_f,L_g,T}. \qedhere 
\end{align*}
\end{proof}

\begin{proof}[Proof of Theorem \ref{maintheorem11}]

Analogously to the proof of Theorem \ref{maintheorem1}, using It\^o's formula and integrating on both sides over $(s,T]$, we get
\begin{multline}\label{equ YnY11}
\ue^{b T} \abs{ \xi^n- \xi}^2-\ue^{b s} \abs{ Y^n_s- Y_s}^2 = \\
    = b \int_s^T \ue^{b t} \abs{ Y^n_t- Y_t}^2\ud t - 2\int_s^T \ue^{b t}(Y^n_t- Y_t)\Big(f^n\big(t,Y^n_t,\overline{U}^n_t(\cdot)\big)- f\Big(t,Y_t,U_t(\cdot)\big)\Big)\ud t  \\
    + 2\int_s^T \ue^{b t} \int_{\R^d}(Y^n_{t-}- Y_{t-}) \big(\overline{U}^n_t(x)- U_t(x)\big) \widetilde \mu(\ud t,\ud x)+ \int_s^T\ue^{b t} \int_{\R^d} \abs{\overline{U}^n_t(x)- U_t(x)}^2  \nu(\ud x,\ud t).
\end{multline}    
Using \citet[Chapter 19]{cohen2015stochastic} or \citet[Chapter 4.1]{delong2013backward} together with Assumption \ref{G1}, which means we are in the Markovian setting, $Y^n_t$ can be expressed as $u^n(t, X^n_t)$ for some Lipschitz function $u^n$. 
Together with Assumption \ref{tF4}, this implies that $\overline{U}^n_t(x)=u^n(t,X^n_{t-}+x)-u^n(t,X^n_{t-})$ is uniformly Lipschitz continuous; see also \citet[Assumption 1, (iii)]{madan2015convergence}. 

Hence, we could  bound the term $(Y^n_t- Y_t)\Big(f^n\big(t,Y^n_t,\overline{U}^n_t(\cdot)\big)- f\big(t,Y_t,U_t(\cdot)\big)\Big)$ by
\begin{multline}\label{ineq Ynf111}
2(Y^n_t- Y_t)\Big(f^n\big(t,Y^n_t,\overline{U}^n_t(\cdot)\big)- f\big(t,Y_t,U_t(\cdot)\big)\Big)=\\
=2(Y^n_t- Y_t)\Big((f^n-f)\big(t,Y^n_t,\overline{U}^n_t(\cdot)\big) +f\big(t,Y^n_t,\overline{U}^n_t(\cdot)\big)-f\big(t,Y_t,U_t(\cdot)\big)\Big)\\
   \leq 2\abs{Y^n_t- Y_t} C_{n,t}
        + 2L_f\abs{Y^n_t- Y_t}^2+2L_f\abs{Y^n_t- Y_t}\Big(\int_{\R^d}\abs{\overline{U}^n_t(x)-U_t(x)}^2\nu(\ud x)\Big)^{\frac{1}{2}}\\
   \leq  \abs{Y^n_t- Y_t}^2+ C^2_{n,t}+L_f\Big[(2+\lambda)\abs{Y^n_t- Y_t}^2+\lambda^{-1}\int_{\R^d}\abs{\overline{U}^n_t(x)-U_t(x)}^2\nu(\ud x)\Big],
\end{multline}
where $ C_{n,t}=c_n\Big[1+\abs{Y^n_t}^2+\int_{\R^d} \abs{\overline{U}^n_t(x)}^2\nu (\ud x)\Big]^\frac{1}{2}$.
Then, taking an expectation in \eqref{equ YnY11} and using \eqref{ineq Ynf111}, we get that
\begin{align*}
\ue^{b T} \E[\abs{ \xi^n- \xi}^2]-\ue^{b s} \E[\abs{ Y^n_s- Y_s}^2]
    &\ge [b- (2+\lambda)L_f-1] \int_s^T \ue^{b t} \abs{ Y^n_t- Y_t}^2\ud t\\
     +(1-L_f\lambda^{-1})&\int_s^T\ue^{b t} \int_{\R^d} \E\Big[\abs{\overline{U}^n_t(x)- U_t(x)}^2\Big]  \nu(\ud x)\ud t- \int_s^T\E[ C^2_{n,t}]\ud t.
\end{align*}
As in \eqref{equ YnY_2}, setting $\lambda=2 L_f$, $b= (2+2 L_f)L_f+\frac{3}{2},$ and using Lemma \ref{prior-estimate-2}, we arrive at
\begin{align}\label{withoutsupYn1}
\ue^{b s} \E[\abs{ Y^n_s- Y_s}^2] + \frac{1}{2}\int_s^T\ue^{b t} \int_{\R^d} \E\Big[\abs{\overline{U}^n_t(x)- U_t(x)}^2\Big]  \nu(\ud x)\ud t
    \leq \ue^{b T} \E[\abs{ \xi^n- \xi}^2]+ C_{L_f,L_g,T}c_n^2.
\end{align}
Moreover, from equality \eqref{equ YnY11} and inequality \eqref{ineq Ynf111}, we have
\begin{multline*}
\ue^{b T} \abs{ \xi^n- \xi}^2-\ue^{b s} \abs{ Y^n_s- Y_s}^2 \ge \\
    \ge 2\int_s^T \ue^{b t} \int_{\R^d}(Y^n_{t-}- Y_{t-}) \big(\overline{U}^n_t(x)- U_t(x)\big) \widetilde \mu(\ud t,\ud x)\\
        - \frac{1}{2}\int_s^T\ue^{b t} \int_{\R^d} \abs{\overline{U}^n_t(x)- U_t(x)}^2  \nu(\ud x)\ud t-\int_s^T C^2_{n,t}\ud t.
\end{multline*}
Similarly to the proof of Theorem \ref{maintheorem1} once again, using the Burkholder--Davis--Gundy inequality for $p=1,$ we can also deduce that 
\begin{align*}
\E\big[ \sup_{t\in\T}\ue^{b t} \abs{ Y^n_t- Y_t}^2 \big] 
    &\leq 2 \ue^{b T} \E[\abs{ \xi^n- \xi}^2] \\
    &\quad +4C^2\E\Big[ \int_0^T \int_{\R^d} \ue^{b t}\abs{ \overline{U}^n_t(x)- U_t(x)}^2  \nu(\ud x)\ud t\Big]+2\int_0^T\E[ C^2_{n,t}]\ud t.
\end{align*}
Recalling \eqref{withoutsupYn1} and that Lemma \ref{appprocess} together with Assumption \ref{G1} yield 
\[
    \E[\abs{ \xi^n- \xi}^2] \leq L^2_g \ \E[\norm{X^n_T-X_T}^2] \leq \frac{(L_g C_\beta)^2 T}{n^{2-\beta}},
\]
allows us to conclude the proof.
\end{proof}

\begin{proof}[Proof of \cref{thm:optimility}]
For any $\Gamma^n\in\mathcal{H}(X^n|_{\mathbb{P}},X|_{\mathbb{P}})$, it is possible to find another probability space $(\hat{\Omega},\hat{\mathcal{F}},\hat{\mathbb{P}})$ such that $(\hat X^n|_{\hat{\mathbb{P}}}, \hat X|_{\hat{\mathbb{P}}}) \sim \Gamma^n $, where $\sim$ denotes equality in distribution, and we just need to verify that 
$$
\hat\E\Big[\sup_{t\in\T}\norm{\hat X_t^n-\hat X_t}^2\Big]^{\frac{1}{2}}\ge c_T \Big(\int_{\norm{x}\leq \frac{1}{2n}} \norm{x}^2 \nu(\ud x)\Big)^{\frac{1}{2}}.
$$  
Here $\hat\E$ denotes the expectation under $\hat{\mathbb{P}}$ and $c_T>0$ does not depend on the choice of $\Gamma^n.$
Let us denote the first jump size of $(\hat X_t)_{t\in\T}$ and $(\hat X^n_t)_{t\in\T}$ by $\Delta \hat X$ and $\Delta \hat X^n$ respectively.
Obviously $\Delta \hat X|_{\hat{\mathbb{Pnocomments}}}\sim \Delta  X|_{\mathbb{P}}$ and $\Delta \hat X^n|_{\hat{\mathbb{P}}}\sim \Delta  X^n|_{\mathbb{P}}.$ 
We also denote by $\hat \tau$ and $\hat \tau^n$ the first jump time of $\hat X$ and $\hat X^n.$
Then, we have
\begin{multline*}
\hat\E\Big[\sup_{t\in\T}\norm{\hat X_t^n-\hat X_t}^2\Big]
    \ge \hat\E\Big[\sup_{t\in\T}\norm{\hat X_t^n-\hat X_t}^2\Bi_{\{\hat \tau\leq T\}}\Big] \\
     \ge \hat\E\Big[\Big(\norm{\hat X_{\hat \tau^n}^n-\hat X_{\hat \tau^n}}^2\Bi_{\{\hat \tau>\hat \tau^n\}} + \norm{\hat X_{\hat \tau^n}^n-\hat X_{\hat \tau^n}}^2\Bi_{\{\hat \tau=\hat \tau^n\}} + \norm{\hat X_{\hat \tau}^n-\hat X_{\hat \tau}}^2\Bi_{\{\hat \tau<\hat \tau^n\}}\Big)\Bi_{\{\hat \tau\leq T\}}\Big] \\
     = \hat\E\Big[\Big(
         \norm{\Delta \hat X^n}^2\Bi_{\{\hat \tau^n< \hat \tau\}} 
         + \norm{\Delta \hat X^n-\Delta \hat X}^2\Bi_{\{\hat \tau^n= \hat \tau\}} 
         + \norm{\Delta \hat X}^2\Bi_{\{\hat \tau^n> \hat \tau\}}\Big)
            \Bi_{\{\hat \tau\leq T\}}\Big] \\
     \ge \hat\E\Big[\Big(
        \norm{\Delta \hat X^n}^2\Bi_{\{\hat \tau^n< \hat \tau\}} 
        + \norm{\Delta \hat X^n-\Delta \hat X}^2\Bi_{\{\hat \tau^n= \hat \tau\}} 
        + \norm{\Delta \hat X}^2\Bi_{\{\hat \tau^n> \hat \tau\}}\Big)
            \Bi_{\{\hat \tau\leq T,\norm{\Delta \hat X}\leq \frac{1}{2n}\}}\Big] \\
     \ge \hat\E\Big[\Big(\norm{\Delta \hat X^n}^2\Bi_{\{\hat \tau^n< \hat \tau\}} + \big(\norm{\Delta \hat X^n}^2-2\norm{\Delta \hat X}\norm{\Delta \hat X^n} + \norm{\Delta \hat X}^2\big)\Bi_{\{\hat \tau^n= \hat \tau\}}\\
     + \norm{\Delta \hat X}^2\Bi_{\{\hat \tau^n> \hat \tau\}}\Big)\Bi_{\{\hat \tau\leq T,\norm{\Delta \hat X}\leq \frac{1}{2n}\}}\Big] \\
     = \hat\E\Big[\Big(\norm{\Delta \hat X^n}^2\Bi_{\{\hat \tau^n< \hat \tau\}} + \norm{\Delta \hat X^n}\big(\norm{\Delta \hat X^n}-2\norm{\Delta \hat X}\big)\Bi_{\{\hat \tau^n= \hat \tau\}}\\
     + \norm{\Delta \hat X}^2\Bi_{\{\hat \tau^n\ge \hat \tau\}}\Big)\Bi_{\{\hat \tau\leq T,\norm{\Delta \hat X}\leq \frac{1}{2n}\}}\Big] \\
     \overset{(\alpha)}{\ge} \hat\E\Big[\Big(\norm{\Delta \hat X^n}^2\Bi_{\{\hat \tau^n< \hat \tau\}}
     + \norm{\Delta \hat X}^2\Bi_{\{\hat \tau^n\ge \hat \tau\}}\Big)\Bi_{\{\hat \tau\leq T,\norm{\Delta \hat X}\leq \frac{1}{2n}\}}\Big] \\
     \overset{(\beta)}{\ge} \hat\E\Big[\Big(\norm{\Delta \hat X}^2\Bi_{\{\hat \tau^n< \hat \tau\}} + \norm{\Delta \hat X}^2\Bi_{\{\hat \tau^n\ge \hat \tau\}}\Big)\Bi_{\{\hat \tau\leq T,\norm{\Delta \hat X}\leq \frac{1}{2n}\}}\Big] \\
     = \hat\E\Big[\norm{\Delta \hat X}^2\Bi_{\{\hat \tau\leq T,\norm{\Delta \hat X}\leq \frac{1}{2n}\}}\Big] \\ 
     \overset{(\gamma)}{=} \hat\P(\hat \tau\leq T)\hat\E\Big[\norm{\Delta \hat X}^2\Bi_{\{\norm{\Delta \hat X}\leq \frac{1}{2n}\}}\Big]
     =\hat\P(\hat \tau\leq T) \int_{\norm{x}\leq \frac{1}{2n}} \norm{x}^2 \nu(\ud x)\\
     =\P( \tau\leq T) \int_{\norm{x}\leq \frac{1}{2n}} \norm{x}^2 \nu(\ud x).
\end{multline*}
Inequalities $(\alpha)$ and $(\beta)$ hold since $\Delta \hat X^n$ is supported on $\norm{x}\ge \frac{1}{n}$ and under the event $\{\|\Delta \hat X\|\leq \frac{1}{2n}\}$, which together imply that $\norm{\Delta \hat X^n}-2\norm{\Delta \hat X}\ge 0$ and $\norm{\Delta \hat X^n}\ge \norm{\Delta \hat X}$. 
Equality $(\gamma)$ holds since $\hat X|_{\hat{\mathbb{P}}}\sim X|_{\mathbb{P}},$ which implies the independence between $\Delta \hat X$ and $\hat \tau$ from Lemma  \ref{lem indep}.
Notice that $\P( \tau\leq T)$ does not depend on the choice of $\Gamma^n$. 
Hence, setting $c_T= \sqrt{\P( \tau\leq T)}$ concludes the proof.
\end{proof}

\begin{proof}[Proof of \cref{cor:optimality}]
Using Theorem \ref{thm:optimility}, we just need to prove, for any $0<\beta<\beta_*,$ that
\begin{align*}
    \limsup_{n\to\infty} n^{2-\beta} \int_{\norm{x}\leq \frac{1}{2n}} \norm{x}^2 \nu(\ud x)=+\infty.
\end{align*}
From the definition of the Blumenthal--Getoor index we have, for any $0<\beta<\beta_*,$ that
 \begin{align*}
     \int_{\norm{x}\leq \frac{1}{2}} \norm{x}^{\beta} \nu(\ud x)=+\infty.
 \end{align*}
Let $A_i:=\int_{\norm{x}\leq \frac{1}{2i}} \norm{x}^2 \nu(\ud x),$ then we can decompose $\int_{\norm{x}\leq \frac{1}{2}} \norm{x}^{\beta} \nu(\ud x)$ as follows:
 \begin{align*}
     \int_{\frac{1}{2(n+1)}\leq\norm{x}\leq \frac{1}{2}} \norm{x}^{\beta} \nu(\ud x)
     &=\sum_{i=1}^n\int_{ \frac{1}{2(i+1)}<\norm{x}\leq \frac{1}{2i}} \norm{x}^{\beta} \nu(\ud x)
     =\sum_{i=1}^n\int_{ \frac{1}{2(i+1)}<\norm{x}\leq \frac{1}{2i}} \norm{x}^2 \norm{x}^{\beta-2}\nu(\ud x)\\
     &\leq \sum_{i=1}^n(2(i+1))^{2-\beta}\int_{ \frac{1}{2(i+1)}<\norm{x}
     \leq \frac{1}{2i}} \norm{x}^2 \nu(\ud x)
     = \sum_{i=1}^n(2(i+1))^{2-\beta}(A_i-A_{i+1})\\
     &\leq 4^{2-\beta} A_1+ \sum_{i=2}^n\Big[(2(i+1))^{2-\beta}-(2i)^{2-\beta}\Big]A_i-(2(n+1))^{2-\beta}A_{n+1}\\
     &\leq 4^{2-\beta} A_1+ \sum_{i=2}^n\Big[(2(i+1))^{2-\beta}-(2i)^{2-\beta}\Big]A_i\\
     &= 2^{2-\beta}\Big\{ 4 A_1 +\sum_{i=2}^n i^{2-\beta}\Big[\Big(1+\frac{1}{i}\Big)^{2-\beta}-1\Big]A_i\Big\}\\
    &\leq 2^{2-\beta}\Big\{ 4 A_1+ \sum_{i=2}^n i^{2-\beta}\Big[\Big(1+\frac{1}{i}\Big)^2-1\Big]A_i\Big\}\\
     &\leq 2^{2-\beta}\Big\{ 4 A_1+
     \sum_{i=2}^n i^{2-\beta}\Big[\frac{2}{i}+\frac{1}{i^2}\Big]A_i\Big\}\\
     &\leq 2^{2-\beta}\Big\{ 4 A_1+
     \sum_{i=2}^n i^{2-\beta}\frac{3}{i}A_i\Big\}\\
     &\leq 2^{2-\beta}\Big\{ 4 A_1+3\sum_{i=2}^n i^{1-\beta}A_i\Big\}\\
     &\leq 2^{2-\beta}3\sum_{i=1}^n i^{1-\beta}A_i.
\end{align*}
Hence, $\int_{\norm{x}\leq \frac{1}{2}} \norm{x}^{\beta} \nu(\ud x)=+\infty$ implies $\sum_{i=1}^{\infty}i^{1-\beta}A_i=+\infty$ for any $0<\beta<\beta_*.$ 

Next, we are going to show that 
\begin{align*}
    \limsup_{n\to\infty} n^{2-\beta} \int_{\norm{x}\leq \frac{1}{2n}} \norm{x}^2 \nu(\ud x)>0.
\end{align*}
Otherwise, there exists a $0<\beta_0<\beta_*,$ such that $ \limsup_{n\to\infty} n^{2-\beta_0} \int_{\norm{x}\leq \frac{1}{2n}} \norm{x}^2 \nu(\ud x)=0.$  
This implies, for any $i\ge 1,$ that
\begin{align*}
  A_i= \int_{\norm{x}\leq \frac{1}{2i}} \norm{x}^2 \nu(\ud x)\leq \frac{\e_i}{i^{2-\beta_0}}.
\end{align*}
with $\lim_{i\to\infty}\e_i=0.$
This means for any $0<\beta<\beta_*,$
\begin{align*}
   \sum_{i=1}^{\infty}i^{1-\beta}A_i\leq  \sum_{i=1}^{\infty} i^{-1-(\beta-\beta_0)}.
 \end{align*}
By choosing $\beta_0<\beta<\beta_*,$ this leads to $ \sum_{i=1}^{\infty}i^{1-\beta}A_i<+\infty,$ which is a contradiction. 
Therefore, for any $0<\beta<\beta_*,$ holds
\begin{align*}
    \limsup_{n\to\infty} n^{2-\beta} \int_{\norm{x}\leq \frac{1}{2n}} \norm{x}^2 \nu(\ud x)>0,
\end{align*}
hence we have that
\begin{equation*}
\limsup_{n\to\infty} n^{2-\beta} \int_{\norm{x}\leq \frac{1}{2n}} \norm{x}^2 \nu(\ud x)
    =\limsup_{n\to\infty} \underbrace{n^{\frac{-\beta+\beta_*}{2}}}_{+\infty}\underbrace{n^{2-\frac{\beta+\beta_*}{2}} \int_{\norm{x}\leq \frac{1}{2n}} \norm{x}^2 \nu(\ud x)}_{>0}
    =+\infty.\qedhere
\end{equation*}
\end{proof}

\appendix

\section{Approximation of \lev processes by random walks}
\label{appendix:neg-lev-rand-walk}

In this appendix, we show a negative result about the approximation of \lev processes by random walks, namely that it is not possible to approximate a pure-jump L\'evy process in the uniform norm using a random walk. 
Indeed, the following simple counterexample demonstrates that we cannot approximate even a (plain) Poisson process in the uniform norm using a random walk approximation. 
In case the topology is weakened to the Skorokhod $J_1-$topology, then it is possible to approximate the desired process by a random walk; see \emph{e.g.} \citet[Lemma 3]{lejay2014numerical} for the case of a Poisson process.

Let us denote by $\{k_n\}_{n\ge 1}$ an increasing positive integer sequence tending to $+\infty$ as $n\to +\infty.$  
Let $(Y_{ni})_{1\leq i\leq k_n}$ denote random variables (not necessarily i.i.d.),  and define the partial sum process as
\begin{align*}
    S_n(t):=\sum_{i=1}^{\upi{k_nt}}Y_{ni}.
\end{align*}
Let $N=(N_t)_{t\in \mathbb T}$ denote a Poisson process with rate $\lambda=1$. 
Then we have the following inequality.

\begin{prop}
The uniform distance between  $(N(t))_{t\in\T}$ and $(S_n(t))_{t\in\T}$ is positive, for any $T>0$, i.e.
\begin{align*}
    \E\Big[\sup_{t\in\T}\abs{N(t)- S_n(t)}\Big]\ge \frac{1-\ue^{-T}}{2}.
\end{align*}
\end{prop}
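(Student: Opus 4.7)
The plan is to exploit a single mismatch: the Poisson process $N$ jumps by exactly $1$ at a continuously distributed time, whereas $S_n$ can only jump at the (deterministic) lattice times $i/k_n$. Hence at the first jump of $N$ the difference process must make a full unit-size move, and no matter how $S_n$ is constructed it cannot absorb more than half of that jump on average.

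Let $\tau := \inf\{t > 0 : \Delta N_t \neq 0\}$ be the first jump time of $N$, which is $\mathrm{Exp}(1)$-distributed, so $\P(\tau \leq T) = 1 - \ue^{-T}$. Since the law of $\tau$ is absolutely continuous, we have $\P(\tau \in \{i/k_n : 1 \leq i \leq k_n\}) = 0$, and so almost surely on $\{\tau \leq T\}$ the function $t \mapsto S_n(t)$ is continuous at $\tau$. Write $v := S_n(\tau) = S_n(\tau-)$ (a well-defined random variable on $\{\tau \leq T\}$, up to a null set). Then $N(\tau-) = 0$ and $N(\tau) = 1$, so
\begin{align*}
    \abs{N(\tau-) - S_n(\tau-)} + \abs{N(\tau) - S_n(\tau)}
    = \abs{v} + \abs{1 - v}
    \ge 1.
\end{align*}
Consequently, on the event $\{\tau \leq T\}$,
\begin{align*}
    \sup_{t \in \mathbb T} \abs{N(t) - S_n(t)}
    \ge \max\Bigl( \abs{N(\tau-) - S_n(\tau-)}, \abs{N(\tau) - S_n(\tau)} \Bigr)
    \ge \tfrac{1}{2}.
\end{align*}
Taking expectations yields
\begin{align*}
    \E\Bigl[\sup_{t \in \mathbb T} \abs{N(t) - S_n(t)}\Bigr]
    \ge \tfrac{1}{2} \P(\tau \leq T)
    = \tfrac{1 - \ue^{-T}}{2},
\end{align*}
which is the desired bound. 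The only subtle point to verify carefully is that, since $\tau$ has a density, it avoids the countable grid of potential jump times of $S_n$ with probability one, making $S_n$ continuous at $\tau$ a.s.; everything else is a one-line triangle inequality. No assumption about the joint law of the $Y_{ni}$, or even about independence from $N$, is used.
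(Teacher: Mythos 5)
Your proposal is correct and follows essentially the same route as the paper's proof: both localize at the first jump time $\tau$ of $N$, observe that $\tau$ almost surely avoids the deterministic grid of potential jump times of $S_n$ (so $S_n$ is continuous at $\tau$), and extract the factor $\tfrac12$ on $\{\tau\leq T\}$ from the triangle-inequality bound $\abs{v}+\abs{1-v}\ge 1$ via $\max \ge$ average. The only cosmetic difference is that you use the explicit values $N(\tau-)=0$, $N(\tau)=1$, where the paper works with the generic increment $N(\tau)=N(\tau-)+1$.
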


\begin{proof}
Denote $t_i(n):=i/k_n$ for $i=0,1,\dots$. 
Notice that the jump times of $S_n$ belong to the set $\mathcal{T}_n=\{t_0(n),t_1(n),\dots\}.$  
Let $\tau$ be the first jump time of the Poisson process $N.$ 
Since $t_0(n),\ t_1(n),\dots$ are all deterministic times, we have that $\mathbb{P}(\tau\in \mathcal{T}_n)=0.$ 
Therefore
\begin{align*}
\E\Big[\sup_{t\in\T}\abs{N(t)- S_n(t)}\Big]
    &\overset{\phantom{(A1)}}{\ge} \E\Big[\sup_{t\in\T}\abs{N(t)- S_n(t)}\ind{\tau\leq T,\tau\notin \mathcal{T}_n}\Big] \\
    &\overset{\phantom{(A1)}}{\ge} \E\Big[\max\Big\{\abs{N(\tau)- S_n(\tau)},\abs{N(\tau-)- S_n(\tau-)}\Big\}\ind{\tau\leq T,\tau\notin \mathcal{T}_n}\Big] \\
    &\overset{(A1)}{=} \E\Big[\max\Big\{\abs{N(\tau)- S_n(\tau)},\abs{N(\tau-)- S_n(\tau)}\Big\}\ind{\tau\leq T,\tau\notin \mathcal{T}_n}\Big] \\
    &\overset{(A2)}{=} \E\Big[\max\Big\{\abs{N(\tau-)+1- S_n(\tau)},\abs{N(\tau-)- S_n(\tau)}\Big\}\ind{\tau\leq T,\tau\notin \mathcal{T}_n}\Big] \\
    &\overset{\phantom{(A1)}}{\ge} \E\Big[\frac{1}{2}\Big\{\abs{N(\tau-)+1- S_n(\tau)}+\abs{N(\tau-)- S_n(\tau)}\Big\}\ind{\tau\leq T,\tau\notin \mathcal{T}_n}\Big] \\
    &\overset{\phantom{(A1)}}{\ge} \frac{1}{2}\E[\ind{\tau\leq T,\tau\notin \mathcal{T}_n}]
    = \frac{1}{2}\E[\ind{\tau\leq T}]
    = \frac{1-\ue^{-T}}{2}. 
\end{align*}
Equality $(A1)$ follows since $S_n(t)$ does not jump at $\tau$ for $\tau\notin \mathcal{T}_n,$ while $(A2)$ holds since $\tau$ is the first  jump time of $N(t)$, which implies that $N(\tau)=N(\tau-)+1.$ \end{proof}



\end{document}